\documentclass[10pt,a4paper]{amsart}
\usepackage{amsmath, amssymb, amsfonts, amsthm, float, stmaryrd, epsfig}
\usepackage[pdfborder={0 0 0 [0 0 ]},bookmarksdepth=3]{hyperref}
\usepackage[latin1]{inputenc}
\usepackage[capitalize]{cleveref}
\usepackage{color}
\usepackage[ps,all,arc,rotate]{xy}
\usepackage{bbm} 


\hypersetup{
    colorlinks=true,
    linkcolor=black,
    citecolor=black,
    filecolor=black,
    urlcolor=black,
}




\numberwithin{figure}{section}
\numberwithin{table}{section}

\theoremstyle{plain}
\newtheorem{thm}{Theorem}[section]
\crefname{thm}{Theorem}{Theorems}
\newtheorem*{prop*}{Proposition}
\newtheorem*{thm*}{Theorem}
\newtheorem{prop}[thm]{Proposition}
\crefname{prop}{Proposition}{Propositions}
\newtheorem{lem}[thm]{Lemma}
\crefname{lem}{Lemma}{Lemmata}
\newtheorem{cor}[thm]{Corollary}
\crefname{cor}{Corollary}{Corollaries}

\theoremstyle{definition}

\newtheorem{dfn}[thm]{Definition}
\newtheorem*{dfn*}{Definition}

\theoremstyle{remark}
\newtheorem{rmk}[thm]{Remark}

\newtheoremstyle{maintheorem}{}{}{\itshape}{}{\bfseries}{}{.5em}{#1 \!\thmnote{#3}.}
\theoremstyle{maintheorem}
\newtheorem*{mainthm}{Theorem}

\makeatletter
\let\c@figure\c@thm
\let\c@table\c@thm
\makeatother
\crefname{figure}{Figure}{Figures}
\crefname{table}{Table}{Tables}


\newcommand{\Aut}{\operatorname{Aut}}
\newcommand{\SAut}{\operatorname{SAut}}
\newcommand{\Out}{\operatorname{Out}}

\newcommand{\GL}{\operatorname{GL}}
\newcommand{\SL}{\operatorname{SL}}

\newcommand{\PGL}{\operatorname{PGL}}

\renewcommand{\sp}{\operatorname{Sp}}

\newcommand{\im}{\operatorname{im}}

\newcommand{\Alt}{\operatorname{Alt}\nolimits}
\newcommand{\Sym}{\operatorname{Sym}\nolimits}

\newcommand{\I}{\mathrm{I}}

\newcommand{\A}{\operatorname{\mathtt{A}}}
\newcommand{\Atwo}{\operatorname{{^2\hspace{-1 pt}}\mathtt{A}}}
\newcommand{\B}{\operatorname{\mathtt{B}}}
\newcommand{\Btwo}{\operatorname{{^2}\mathtt{B}}}
\newcommand{\typeC}{\operatorname{\mathtt{C}}}
\newcommand{\D}{\operatorname{\mathtt{D}}}
\newcommand{\Dtwo}{\operatorname{{^2}\mathtt{D}}}
\newcommand{\Dthree}{\operatorname{{^3}\mathtt{D}}}
\newcommand{\E}{\operatorname{\mathtt{E}}}
\newcommand{\Etwo}{\operatorname{{^2}\mathtt{E}}}
\newcommand{\typeF}{\operatorname{\mathtt{F}}}
\newcommand{\typeFtwo}{\operatorname{{^2}\mathtt{F}}}
\newcommand{\G}{\operatorname{\mathtt{G}}}
\newcommand{\Gtwo}{\operatorname{{^2}\mathtt{G}}}

\newcommand{\Fi}{\operatorname{Fi}}

\def\C{\mathbb{C}}

\def\Z{\mathbb{Z}}

\def\1{\mathbbm{1}}

\def\F{\mathbb{F}}

\def\s-{\smallsetminus}

\newcommand{\mcgop}{\operatorname{MCG}}
\newcommand{\mcgcl}[1]{\mcgop(\Sigma_{{#1}})}
\newcommand{\mcg}[2]{\mcgop(\Sigma_{{#1},{#2}})}


\newcounter{dawidcomments}

\newcounter{emiliocomments}

\newcounter{barbaracomments}


\author{Dawid Kielak and Emilio Pierro}
\title{On the smallest non-trivial quotients of mapping class groups}
\date{\today}

\begin{document}

\begin{abstract}
We prove that the smallest non-trivial quotient of the mapping class group of a connected orientable surface of genus $g \geqslant 3$ without punctures is $\sp_{2g}(2)$, thus confirming a conjecture of Zimmermann. In the process, we generalise Korkmaz's results on $\C$-linear representations of mapping class groups to projective representations over any field.
\end{abstract}

\maketitle
\section{Introduction}

In \cite{Zimmermann2012} Zimmermann conjectured that the smallest quotient of the mapping class group of a surface  of genus $g$ (with $g \geqslant 3$) is $\sp_{2g}(2)$, the symplectic group of rank $g$ over the field of two elements. Zimmermann proved this statement for $g \in \{3,4\}$. We confirm his conjecture in general, and prove
\begin{mainthm}[\ref{main thm}]
 Let $K$ be a non-trivial finite quotient of $\mcg{g}{b}$, the mapping class group of a connected orientable surface $\Sigma_{g,b}$ of genus $g \geqslant 3$ with $b$ boundary components. Then either $\vert K \vert > \vert \sp_{2g}(2) \vert$, or $K \cong \sp_{2g}(2)$ and the quotient map is obtained by postcomposing the natural map $\mcg{g}{b} \to \sp_{2g}(2)$ with an automorphism of $\sp_{2g}(2)$.
\end{mainthm}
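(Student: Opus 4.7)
The plan is to reduce to a non-abelian simple quotient, use the generalisation of Korkmaz's theorem promised in the abstract to bound its minimal faithful projective dimension from below, and then invoke the classification of finite simple groups (CFSG) to identify it.

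Suppose $K$ is a non-trivial finite quotient of $\mcg{g}{b}$ of minimal order. Any proper non-trivial normal subgroup of $K$ would yield a strictly smaller non-trivial quotient of $\mcg{g}{b}$, contradicting minimality, so $K$ is simple; and since $\mcg{g}{b}$ is perfect for $g \geqslant 3$, the quotient $K$ is perfect, hence non-abelian simple. It therefore suffices to establish, first, that the only non-abelian finite simple quotient of $\mcg{g}{b}$ of order at most $\vert \sp_{2g}(2) \vert$ is $\sp_{2g}(2)$ itself, and, second, that every such surjection agrees with the natural one up to an automorphism of the target.

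For the first assertion, I would use the generalised Korkmaz theorem to bound the minimal faithful projective dimension of $K$ from below. Given any faithful projective representation $K \into \PGL_n(\F)$ over an arbitrary field $\F$, precomposition with the surjection $\mcg{g}{b} \twoheadrightarrow K$ yields a projective representation of $\mcg{g}{b}$ whose image contains the non-abelian simple group $K$. If $n < 2g$, the generalised Korkmaz theorem forces such a projective representation to be trivial, contradicting the non-triviality of $K$; hence $n \geqslant 2g$. Combined with the order bound $\vert K \vert \leqslant \vert \sp_{2g}(2) \vert$, a CFSG-based case analysis---employing Landazuri-Seitz-type lower bounds for groups of Lie type, minimal-degree data for alternating groups, and character-table information for sporadic groups---should leave $K \cong \sp_{2g}(2)$ as the only surviving candidate. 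I expect this step to be the main obstacle: the case analysis must be uniform in $g$ and the small-$g$ exceptional isomorphisms between groups of Lie type require careful handling.

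For the uniqueness of the quotient map, the extremal $n = 2g$ case of the generalised Korkmaz theorem should pin down any surjection $\phi \colon \mcg{g}{b} \twoheadrightarrow \sp_{2g}(2)$ up to $\Aut(\sp_{2g}(2))$: composing $\phi$ with the natural $2g$-dimensional $\F_2$-representation of $\sp_{2g}(2)$ gives a projective representation of $\mcg{g}{b}$ of dimension $2g$, which, by the extremal case, must coincide up to projective equivalence with the one induced by the natural map $\pi$. This equivalence forces $\ker \phi = \ker \pi$, whence $\phi$ and $\pi$ differ by an automorphism of the target.
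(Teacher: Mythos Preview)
Your reduction to a non-abelian simple quotient and your use of the projective-dimension bound are exactly how the paper begins, but the two constraints ``minimal faithful projective dimension $\geqslant 2g$'' and ``$|K| \leqslant |\sp_{2g}(2)|$'' do \emph{not} by themselves cut the list down to $\sp_{2g}(2)$. Concretely: for every $g \geqslant 4$ the orthogonal groups $\D_g(2)$ and $\Dtwo_g(2)$ have order strictly less than $|\sp_{2g}(2)|$ and their natural modules are $2g$-dimensional over $\F_2$, so they pass both of your filters; for $g=3$ the sporadic group $\mathrm{J}_2$ has order $604800 < |\sp_6(2)|$ and a faithful $6$-dimensional projective representation over $\F_4$ (via $2.\mathrm{J}_2 < \sp_6(4)$), and $\Alt_9$ has order $181440 < |\sp_6(2)|$ with minimal faithful projective degree $8$. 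Landazuri--Seitz bounds are cross-characteristic and say nothing about these defining-characteristic or natural embeddings. The paper closes these gaps with a second, independent input you do not mention: the Berrick--Gebhardt--Paris theorem that the minimal index of a proper subgroup of $\mcg{g}{b}$ is $2^{g-1}(2^g-1)$, and that a subgroup of this index is unique up to conjugacy. This is what eliminates the alternating groups (via their action on $n$ points), eliminates $\D_g(2)$ and $\Dtwo_g(2)$ (via their small-index parabolics), and is also what the paper uses for the \emph{uniqueness} of the quotient map --- not a classification of $2g$-dimensional projective representations, which the paper does not prove over arbitrary fields.

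For the exceptional and sporadic groups the paper does not rely on representation-dimension bounds at all for small $g$; instead it develops a separate inductive machine. The key observation is that $\mcg{g}{b}$ contains large elementary abelian subgroups generated by commuting Dehn twists, which forces the $m$-rank of any quotient to be at least $g$ for suitable $m$; combined with bounds on $p$-ranks of Lie-type and sporadic groups this already kills most cases. The remaining ones are handled by passing to the centraliser of (a power of) the image of a Dehn twist and using Borel--Tits and the Levi decomposition (in the Lie-type case) or the known centraliser structure (in the sporadic case) to feed an induction on the Lie rank or on $|K|$, exploiting the surjection $\mcg{g-1}{b+2} \twoheadrightarrow C_{\mcg{g}{b}}(T_1)$. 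Your outline would need either to incorporate Berrick--Gebhardt--Paris and this centraliser machinery, or to supply genuinely new arguments for the surviving groups listed above.
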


The natural map is obtained as a sequence of surjections: we start by observing that $\mcg{g}{b}$ surjects by \cite[Proposition 3.19 and Theorem 4.6]{FarbMargalit2012} onto $\mcg{g}{0}$ (which we will denote by $\mcgcl{g}$) -- the surjection is obtained by `capping', that is gluing discs to the boundary components of $\Sigma_{g,b}$. The group $\mcgcl{g}$ acts on
\[\mathrm{H}_1(\Sigma_{g}) \cong \Z^{2g}\]
in a way preserving the algebraic intersection number, which is a symplectic form. This leads to a homomorphism
\[
 \mcgcl{g} \to \sp_{2g}(\Z)
\]
which is surjective by \cite[Theorem 6.4]{FarbMargalit2012}.
Reducing integers modulo $2$ we obtain an epimorphism $\sp_{2g}(\Z) \to \sp_{2g}(2)$, and this way we obtain the natural map $\mcg{g}{b} \to \sp_{2g}(2)$.

Note that $\mcgcl{g}$ has plenty of finite quotients -- Grossman showed in~\cite{Grossman1974} that it is residually finite. (In the same paper Grossman showed that $\Out(F_n)$ is residually finite as well.)

The situation changes when we allow punctures: the mapping class group of a surface with $n$ punctures maps onto the symmetric group $\Sym_n$, and such a quotient can be smaller than $\sp_{2g}(2)$. If we however look at a pure mapping class group, our theorem applies, since such a mapping class group can be obtained from $\mcg{g}{b}$ by capping the boundary components with punctured discs; the resulting homomorphism is surjective.

When $\Gamma$ is a closed non-orientable surface of genus $g \geqslant 3$, then it contains a subsurface homeomorphic to $\Sigma_{g,1}$, and so \cref{main thm} implies that any finite quotient of $\mcgop(\Gamma)$ is either of cardinality at most $2$, or at least $\vert \sp_{2g}(2) \vert$ (we are using the fact that Dehn twists along simple non-separating two-sided curves generate an index $2$ subgroup of $\mcgop(\Gamma)$, as shown by Lickorish~\cite{Lickorish1965}).

\smallskip

Our result coheres nicely  with the theorem of Berrick--Gebhardt--Paris~\cite{Berricketal2014} which states that the smallest non-trivial  action of $\mcg{g}{b}$ on a set is unique and obtained by mapping $\mcg{g}{b}$ onto $\sp_{2g}(2)$ and taking the smallest permutation representation of the latter group.
Let us remark here that the discussion above has a parallel in the setting of automorphisms of free groups: Baumeister together with the authors has shown in \cite{Baumeisteretal2017} that the smallest non-abelian quotient of $\Aut(F_n)$ is $\SL_n(2)$; in the same paper it is shown that (for large $n$) every action of the group $\SAut(F_n)$ of pure automorphisms of $F_n$ on a set of cardinality $2^{n/2}$ is trivial, and the smallest known non-trivial action comes from the smallest action of $\SL_n(2)$, and is defined on a set of cardinality $2^n-1$.

\smallskip

The proof of \cref{main thm} follows the same general outline as in \cite{Baumeisteretal2017}: since $\mcg{g}{b}$ is perfect, we know that its smallest non-trivial quotient is simple and non-abelian. We go through the Classification of Finite Simple Groups (CFSG) and exclude all such groups smaller than $\sp_{2g}(2)$ from the list of potential quotients.
The finite simple groups fall into one of the following four families:\begin{enumerate}
\item the cyclic groups of prime order;
\item the alternating groups $\Alt_n$, for $n \geqslant 5$;
\item the finite groups of Lie type; and
\item the $26$ sporadic groups.
\end{enumerate}
For the full statement of the CFSG we refer the reader to \cite{raw}. For the purpose of this paper, we further divide the finite groups of Lie type into the following two families:
\begin{enumerate}
\item[($3$C)] the ``classical groups'': $\A_n$, $\Atwo_n$, $\B_n$, $\typeC_n$, $\D_n$ and $\Dtwo_n$; and
\item[($3$E)] the ``exceptional groups'': $\Btwo_2$, $\Gtwo_2$, $\typeFtwo_4$, $\Dthree_4$, $\Etwo_6$, $\G_2$, $\typeF_4$, $\E_6$, $\E_7$ and $\E_8$.
\end{enumerate}

The cyclic groups are excluded since they are abelian. The alternating groups are easily dealt with using the aforementioned result of Berrick--Gebhardt--Paris (see \cref{bgparis} and the corollary following it).

To deal with the classical groups we investigate the low-dimensional representation theory of $\mcg{g}{b}$. Korkmaz in \cite{Korkmaz2011} (see also \cite{FranksHandel2013} by Franks--Handel) showed that every homomorphism $\mcg{g}{b} \to \GL_n(\C)$ has abelian image whenever $g \geqslant 1$ and $n<2g$ (in particular, the image is trivial when $g \geqslant 3$). His method can actually be used to obtain the more general
\begin{mainthm}[\ref{main thm reps}]
Every projective representation of $\mcg{g}{b}$ with $g\geqslant 3$ and $b \geqslant 0$ in dimension less than $2g$ is trivial.
\end{mainthm}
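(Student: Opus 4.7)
The plan is to reduce the projective setting to Korkmaz's linear case by passing to a central extension and an algebraic closure, and then to verify that his argument --- written originally over $\C$ --- survives over an arbitrary field. Given $\rho \colon \mcg{g}{b} \to \PGL_n(K)$ with $n < 2g$ and $g \geqslant 3$, first extend scalars to $\overline{K}$ so as to have Jordan canonical forms available, and then form the pullback along the central surjection $\GL_n(\overline{K}) \to \PGL_n(\overline{K})$, obtaining a central extension
\[
1 \to \overline{K}^\times \to \widetilde{G} \to \mcg{g}{b} \to 1
\]
equipped with a tautological linear representation $\tilde{\rho} \colon \widetilde{G} \to \GL_n(\overline{K})$ that covers $\rho$. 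Triviality of $\rho$ is then equivalent to $\tilde{\rho}(\widetilde{G})$ consisting of scalar matrices.

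Next, replay Korkmaz's method on $\tilde{\rho}$. His proof works with a Dehn-twist generating set and exploits (i) conjugacy of all non-separating Dehn twists, (ii) commutativity of twists along disjoint curves, (iii) the braid relation for twists along curves meeting once, and (iv) global relations of chain or lantern type. Together with $n < 2g$ these force the image of every non-separating Dehn twist to be a scalar matrix. In the central extension $\widetilde{G}$ each such relation holds only up to a central scalar, but Korkmaz's intermediate conclusions are phrased in terms of similarity classes, ratios of eigenvalues, and common invariant subspaces --- projective invariants insensitive to such scalar ambiguities --- so the argument transfers. Once every Dehn twist maps to a scalar in $\GL_n(\overline{K})$, the projective representation $\rho$ is trivial on a generating set of $\mcg{g}{b}$, and hence trivial.

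The main obstacle is carrying out a line-by-line audit of Korkmaz's proof over an arbitrary algebraically closed field. His arguments use diagonalisation of torsion elements and the separation of small roots of unity, both of which can fail in small positive characteristic. I expect this to be repaired by replacing eigenvalue-level statements with statements about minimal polynomials and dimensions of generalised eigenspaces; the hypothesis $n < 2g$ leaves enough room that exotic eigenvalue coincidences occurring only in bad characteristic can still be excluded. Tracking the central scalars produced by lifting mapping-class relations to $\widetilde{G}$ is expected to be tedious but not serious, since these scalars are invisible after projecting back to $\PGL_n(\overline{K})$.
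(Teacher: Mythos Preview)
Your high-level strategy --- pass to an algebraic closure and rerun Korkmaz's argument while tracking lifts only up to scalars --- is exactly what the paper does (it works directly with chosen lifts $\widehat{X}\in\GL_n(\F)$ of each $\phi(X)\in\PGL_n(\F)$ rather than through a formal pullback extension, but this is cosmetic). However, what you have written is a programme, not a proof: you explicitly defer both the characteristic-$p$ audit and the scalar bookkeeping, and these are precisely where the content lives.

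Two concrete gaps. First, Korkmaz's argument is an induction on $g$ via a subsurface $\Gamma$ of genus $g-1$; your summary in terms of relations (i)--(iv) does not mention this structural step. When $g=3$ the subsurface has genus $2$, and $\mcg{2}{b}$ is \emph{not} perfect (its abelianisation is cyclic of order $10$), so one cannot simply invoke the inductive hypothesis. One needs instead a direct classification of projective representations of $\mcg{2}{b}$ in dimensions $\leqslant 3$ over an arbitrary algebraically closed field; the paper devotes several pages to this Jordan-form case analysis, and it is the technical heart of the argument. Your proposal does not isolate this base case. Second, the assertion that Korkmaz's intermediate conclusions are all ``projective invariants insensitive to scalar ambiguities'' is the right intuition but is not free. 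The paper formalises the defect via a homomorphism $h_T\colon\mcgop(\Gamma)\to\F^\times$ recording the scalar in $[\widehat T,\widehat X]$; for $g\geqslant 4$ it vanishes because $\mcg{g-1}{b+1}$ is perfect, but for $g=3$ it need not, and one must work instead with $\widehat T\widehat S^{-1}$ so that the defects cancel. This interacts nontrivially with the genus-$2$ base case above and is exactly the ``tedious but not serious'' step you set aside.
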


Note that the representation theory of mapping class groups is in general poorly understood, even when compared with the representation theory of $\Out(F_n)$: it is still an open problem whether mapping class groups are linear; in this context let us mention a result of Button \cite{Button2016}, who proved that mapping class groups are not linear over fields of positive characteristic. The groups $\Out(F_n)$ are not linear over any field, as shown by Formanek--Procesi~\cite{FormanekProcesi1992}.
In the setting of mapping class groups there is also no satisfactory counterpart to the result of the first-named author, who in \cite{Kielak2013} classified linear representations of $\Out(F_n)$ (for $n \geqslant 6$ and over fields of characteristic not dividing $n+1$) in all dimensions below $\binom {n+1} 2$. The best result in this direction is proven by Korkmaz in~\cite{Korkmaz2011a}, where he classified all representations of $\mcg{g}{b}$ over $\C$ up to dimension $2g$.

\subsection*{Acknowledgements.} The authors are grateful to Barbara Baumeister and Stefan Witzel for helpful conversations.

The first-named author was supported by the DFG Grant KI-1853.
The second-named author was supported by the SFB 701 of Bielefeld University.

\section{Mapping class groups and their representations}

\begin{figure}
\begin{center}
 \includegraphics[scale=0.4]{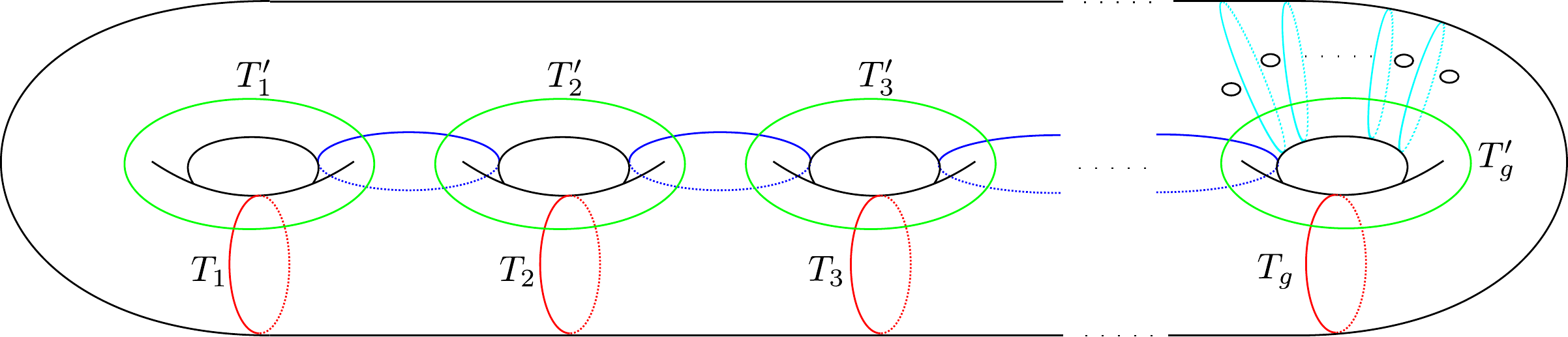}
 \caption{}
 \label{surface}
 \end{center}
 \end{figure}

 Throughout, we take $\Sigma_{g,b}$ to be a connected orientable surface of genus $g$ with $b$ boundary components. 
We allow boundary components as they accommodate inductive arguments; we do not allow punctures.

We let $\mcg{g}{b}$ denote the mapping class group of $\Sigma_{g,b}$, that is the group of isotopy classes of orientation preserving homeomorphisms of $\Sigma_{g,b}$ which preserve each boundary component pointwise. The isotopies are also required to fix the boundary components pointwise.

\cref{surface} depicts a choice of $3g+b-2$ simple closed curves on $\Sigma_{g,b}$ (in fact $3g-1$ when $b=0$); any two of these curves can be permuted by homeomorphisms of $\Sigma_{g,b}$. In fact more is true: using the classification of surfaces, we immediately see that any two non-separating simple closed curves in $\Sigma_{g,b}$ can be mapped to one another using a homeomorphism of $\Sigma_{g,b}$.

We let $\mathcal S$ denote the set of Dehn twists along the curves visible in \cref{surface}. The set $\mathcal S$ is a generating set
for $\mcg{g}{b}$ when $g \geqslant 2$; this and similar facts can easily be found in the book of Farb and Margalit~\cite{FarbMargalit2012}.
We let $T_1, \dots, T_g$ and $T_1', \dots, T_g'$ denote Dehn twists along the indicated curves.
When arguing about representations of $\mcg{g}{b}$, we will repeatedly use the braid relations: for every $i$ we have
\[
 T_i T_i' T_i = T_i'T_i T_i'
\]

Let us record the following classical result -- for a discussion on the history of the result, see \cite[Section 5.1]{FarbMargalit2012}. The formulation we state here is as given by Korkmaz.
\begin{thm}[{\cite[Theorem 5.1]{Korkmaz2002}} and {\cite[Theorem 3.4]{Korkmaz2011}}]
\label{perfect}
Let $b \geqslant 0$ and $g \geqslant 3$. The abelianisation of $\mcg{g}{b}$ is trivial. When $g=2$, the
abelianisation of $\mcg{2}{b}$ is isomorphic to the cyclic group of order $10$, and the derived subgroup of $\mcg{2}{b}$ is perfect.
\end{thm}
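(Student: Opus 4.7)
The plan is to use the Dehn--Lickorish theorem, which asserts that for $g \geqslant 2$ the group $\mcg{g}{b}$ is generated by finitely many Dehn twists along non-separating simple closed curves. Combined with the change-of-coordinates principle -- the observation in the paper that any two non-separating simple closed curves in $\Sigma_{g,b}$ are in the same mapping class group orbit -- this implies that all such Dehn twists are mutually conjugate in $\mcg{g}{b}$, so their images in the abelianisation $\mcg{g}{b}^{\mathrm{ab}}$ coincide in a single element $t$. A Dehn twist along a boundary-parallel curve can, via a chain relation, be written in terms of the other generators, so its class too is an integer multiple of $t$. Hence $\mcg{g}{b}^{\mathrm{ab}}$ is cyclic, generated by $t$, and the problem reduces to determining the order of $t$.

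Next, for $g \geqslant 3$, I would apply the lantern relation to an embedded four-holed sphere $F \subset \Sigma_{g,b}$, chosen so that all seven simple closed curves appearing in the relation -- the three ``interior'' curves and the four boundary components of $F$ -- are non-separating in $\Sigma_{g,b}$. Such an embedding exists precisely because the ambient genus is at least $3$, leaving enough room for the four boundary components of $F$ to be individually non-separating. Abelianising the lantern identity $T_a T_b T_c = T_x T_y T_z T_w$ then reads $3t = 4t$, which forces $t = 0$. Thus $\mcg{g}{b}^{\mathrm{ab}}$ is trivial.

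For $g = 2$, the lantern trick is unavailable, and a finer argument is required. I would instead use the chain relation applied to a maximal chain of five non-separating curves inside $\Sigma_{2,b}$: this relation expresses the sixth power of a product of five non-separating Dehn twists as a product of Dehn twists along boundary-parallel curves of a genus-$2$ subsurface, which in turn abelianise to multiples of $t$. Collecting contributions yields a relation of the form $10\,t = 0$. To confirm that the order is exactly $10$, one then exhibits an explicit surjection $\mcg{2}{b} \twoheadrightarrow \Z/10$, for instance by combining the sign representation of the symmetric group associated with the hyperelliptic involution with the mod-$2$ symplectic representation, or by reading off the abelianisation of the hyperelliptic mapping class group of $\Sigma_2$.

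The hardest part will be the final assertion, that the derived subgroup of $\mcg{2}{b}$ is itself perfect. My approach would be to first produce a concrete generating set for $[\mcg{2}{b}, \mcg{2}{b}]$ -- for example of the form $\{T_\gamma^{10}\} \cup \{[T_\alpha, T_\beta]\}$ ranging over suitable configurations of non-separating curves $\alpha, \beta, \gamma$ -- and then, through systematic use of lantern and braid relations, rewrite each such generator as a product of commutators of elements that already lie in the derived subgroup. Checking that the rewriting can actually be carried out inside $[\mcg{2}{b}, \mcg{2}{b}]$ (and not merely inside $\mcg{2}{b}$) is a delicate combinatorial exercise in genus $2$, and I would expect this to absorb the bulk of the real work.
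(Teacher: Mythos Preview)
The paper does not prove this statement at all: it is recorded as a classical result and attributed to Korkmaz via the citations \cite[Theorem 5.1]{Korkmaz2002} and \cite[Theorem 3.4]{Korkmaz2011}. There is therefore no ``paper's own proof'' to compare against.

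That said, your sketch for $g \geqslant 3$ is exactly the standard argument (conjugacy of non-separating Dehn twists plus the lantern relation with all seven curves non-separating), and it is correct. For $g=2$ your outline is in the right spirit, but the step ``collecting contributions yields $10t=0$'' is not quite as you describe it: the $5$-chain relation in the closed surface gives $(T_1\cdots T_5)^6=1$, hence $30t=0$, and one needs a second relation (coming from the hyperelliptic involution, or equivalently from Wajnryb's presentation) to cut this down to order $10$. Your remark that exhibiting a surjection onto $\Z/10$ pins down the order is correct. For the perfectness of the derived subgroup in genus $2$ you are right that this is the substantial part; Korkmaz's argument in \cite{Korkmaz2011} proceeds along broadly similar lines to what you propose, and your expectation that it absorbs the bulk of the work is accurate.
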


We will now use the method developed by Korkmaz~\cite{Korkmaz2011} to investigate low-dimensional representations of $\mcg{g}{b}$. Korkmaz originally studied $\C$-linear representations, whereas we are interested in projective representations over more general fields.

\begin{prop}
\label{killing dehn twists}
Let $g\geqslant 2$ and $b \geqslant 0$, and let $\phi \colon \mcg{g}{b} \to G$ be a homomorphism. Let $T$ and $S$ denote two Dehn twists along simple closed non-separating curves intersecting each other in a single point. The following are equivalent:
\begin{enumerate}
 \item $\phi(T) = \phi(S)$.
 \item $\phi(T)$ commutes with $\phi(S)$.
 \item $\phi$ factors through the abelianisation of $\mcg{g}{b}$.
\end{enumerate}
\end{prop}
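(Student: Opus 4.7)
I would prove the cycle of implications $(3) \Rightarrow (1) \Rightarrow (2) \Rightarrow (1) \Rightarrow (3)$, where the two direct implications are short and the content lies in $(2) \Rightarrow (1)$ (a braid-relation calculation) and $(1) \Rightarrow (3)$ (a ``change of coordinates'' propagation).

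For $(3) \Rightarrow (1)$, I would use the change-of-coordinates principle quoted in the excerpt: any two non-separating simple closed curves on $\Sigma_{g,b}$ lie in the same mapping class group orbit, so $T$ and $S$ are conjugate in $\mcg{g}{b}$, and therefore have the same image in any abelian quotient. The implication $(1) \Rightarrow (2)$ is immediate, since any element commutes with itself.

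For $(2) \Rightarrow (1)$, the hypothesis that the two curves intersect once means $T$ and $S$ satisfy the braid relation $TST = STS$, so writing $t = \phi(T)$ and $s = \phi(S)$ we get $tst = sts$. If $ts = st$ then rewriting the left side as $t^2 s$ and the right side as $s^2 t$ and multiplying on the right by $t^{-1}s^{-1}$ (using commutativity freely) yields $t = s$, as required.

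The main obstacle is $(1) \Rightarrow (3)$. The plan is to show that all Dehn twists along non-separating curves have the same $\phi$-image. Given any ordered pair $(\alpha',\beta')$ of non-separating simple closed curves meeting in a single point, the change-of-coordinates principle (applied to pairs rather than single curves, via the classification of surfaces) produces a homeomorphism $h$ of $\Sigma_{g,b}$ taking the pair of curves underlying $(T,S)$ to $(\alpha',\beta')$. Conjugating by $\phi(h)$ then shows $\phi(T_{\alpha'}) = \phi(T_{\beta'})$, so the equality in $(1)$ propagates to every such intersecting pair. Next, I would use the fact that for $g \geqslant 2$ any two non-separating simple closed curves can be joined by a chain of non-separating simple closed curves in which consecutive terms meet in a single point (a standard connectivity statement, proven by induction on geometric intersection number); iterating the previous step along such a chain shows that \emph{all} Dehn twists along non-separating simple closed curves share a common image $x \in G$. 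Since the generating set $\mathcal{S}$ consists of Dehn twists along non-separating curves and generates $\mcg{g}{b}$ for $g \geqslant 2$, the image of $\phi$ is contained in $\langle x \rangle$, hence abelian, so $\phi$ factors through the abelianisation. The delicate point to check is that the generators in $\mathcal{S}$ are indeed all non-separating, and that the connectivity statement above applies uniformly to surfaces with boundary.
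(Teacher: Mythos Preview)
Your proof is correct, but the route differs from the paper's. The paper proves the cycle $(1)\Rightarrow(2)\Rightarrow(3)\Rightarrow(1)$: the step $(2)\Rightarrow(3)$ is done directly by observing that any two generators in $\mathcal S$ either already commute in $\mcg{g}{b}$ or can be simultaneously conjugated to the pair $(T,S)$, so their $\phi$-images commute; hence $\im\phi$ is abelian. No braid relation and no connectivity argument is used.

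Your detour $(2)\Rightarrow(1)\Rightarrow(3)$ is a genuine alternative. The braid-relation computation for $(2)\Rightarrow(1)$ is a clean trick the paper does not exploit. For $(1)\Rightarrow(3)$ you propagate \emph{equality} rather than \emph{commutativity} via change of coordinates, which forces you to invoke the connectivity of the graph of non-separating curves (edges given by single intersection). That is correct but heavier than necessary: since the generators in $\mathcal S$ already form a connected chain under single intersection, you could conclude directly that all generators in $\mathcal S$ share the same $\phi$-image, bypassing the general connectivity statement. Either way your argument yields the stronger conclusion that $\im\phi$ is cyclic, not merely abelian---though of course (3) recovers (1) and hence this stronger fact anyway.
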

\begin{proof}
\noindent \textbf{(1) $\Rightarrow$ (2)} This is clear.
\smallskip

\noindent \textbf{(2) $\Rightarrow$ (3)}
Note that $\mcg{g}{b}$ is generated by $\mathcal S$, and we immediately see that $\phi(T')$ commutes with $\phi(S')$ for every $T',S' \in \mathcal S$: either this is already true in $\mcg{g}{b}$, or we can find a homeomorphism $h$ of $\Sigma_{g,b}$ such that ${T'}^h=T$ and ${S'}^h = S$, and then use the fact that $\phi(T)$ and $\phi(S)$ commute. Therefore, $\phi$ takes any two elements of the generating set $\mathcal S$ to commuting elements in $G$, and so $\im \phi$ is abelian.

\smallskip
\noindent \textbf{(3) $\Rightarrow$ (1)}
All Dehn twists along simple closed non-separating curves are conjugate in $\mcg{g}{b}$, and hence so are $S$ and $T$. If $\phi$ has abelian image then $\phi(T) = \phi(S)$.
\end{proof}


Throughout, we use $C_G(x)$ to denote the centraliser of $x$ in $G$ (where $x \in G$).

\begin{figure}
\begin{center}
 \includegraphics[scale=0.4]{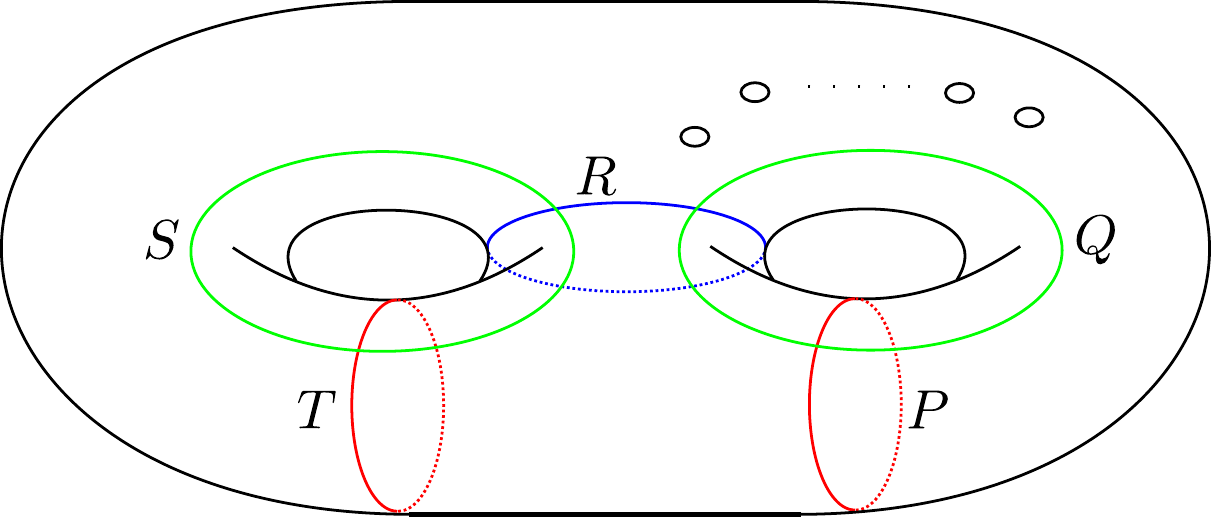}
 \caption{}
 \label{surface2}
 \end{center}
 \end{figure}

\begin{prop}
\label{reps base case}
Let $b \geqslant 0$.
Every projective representation \[\phi \colon \mcg{2}{b} \to \PGL_n(\F)\]
with $n\leqslant 3$ has abelian image.
\end{prop}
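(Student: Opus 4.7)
The plan is to reduce to \cref{killing dehn twists} applied with $G = \PGL_n(\F)$: it suffices to produce two Dehn twists $T, S$ along non-separating simple closed curves meeting in one point whose images commute in $\PGL_n(\F)$. The case $n=1$ is immediate since $\PGL_1(\F)$ is trivial, so we concentrate on $n \in \{2,3\}$.

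The first step is to pass from projective to linear data. Choose lifts $A, B \in \GL_n(\F)$ of $\phi(T), \phi(S)$. Because $T$ and $S$ are conjugate in $\mcg{2}{b}$, after rescaling $B$ we may assume that $A$ and $B$ are conjugate in $\GL_n(\F)$, and in particular share a characteristic polynomial. The braid relation $TST = STS$ lifts to
\[
    ABA = \lambda\, BAB
\]
for some $\lambda \in \F^\times$; taking determinants yields $\lambda^n = 1$. The braid relation alone cannot force $A$ and $B$ to commute modulo the centre — the braid group $B_3$ has plenty of non-abelian projective representations in small dimension — so one must bring in additional Dehn twists. The key additional input is a third Dehn twist $U$ whose curve is disjoint from that of $T$ and meets that of $S$ in one point; such $U$ exists because $\Sigma_{2,b}$ supports a chain of five non-separating simple closed curves. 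The lift $C \in \GL_n(\F)$ of $\phi(U)$ shares the characteristic polynomial of $A$ and $B$, commutes with $A$ up to a scalar, and satisfies a braid relation with $B$ up to a scalar.

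The analysis then proceeds by linear algebra. For $n=2$, split into the cases where the common characteristic polynomial has two distinct roots or a double root. In the diagonalisable case, conjugate so that $A$ is diagonal; the scalar commutation $AC = \mu CA$ forces $C$ to be either diagonal or anti-diagonal, and combining this with the braid relation between $B$ and $C$ and the equality of characteristic polynomials pins $B$ down to a form that commutes with $A$ modulo scalars. In the unipotent case, $A$ is conjugate to a Jordan block, and a direct computation shows that the braid relation together with the same characteristic polynomial forces $A$ and $B$ to share a fixed line, making them commute in $\PGL_2(\F)$. For $n=3$, the same strategy is carried out with more bookkeeping, splitting according to whether the common characteristic polynomial has one, two, or three distinct roots and using the commutation with $C$ to restrict $B$ to a monomial form in an eigenbasis of $A$.

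The main obstacle is the three-dimensional subcase with three pairwise distinct eigenvalues: here $A$ is diagonal in a unique basis (up to order), and one must show that $B$, which must permute the one-dimensional eigenspaces of $A$ monomially in order to satisfy $ABA = \lambda BAB$ and have the same spectrum as $A$, in fact induces the trivial permutation. This is exactly where the third twist $C$, together with the braid relation for $(B,C)$, is exploited: a non-trivial permutation would force $C$ to permute eigenspaces incompatibly with $AC = \mu CA$, giving a contradiction. Once $B$ is diagonal in the eigenbasis of $A$, it commutes with $A$ in $\GL_n(\F)$, so their images commute in $\PGL_n(\F)$, and \cref{killing dehn twists} finishes the proof.
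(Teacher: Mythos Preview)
Your plan has the right endgame (reduce to \cref{killing dehn twists}), but there is a genuine gap in the linear-algebraic core, and it is not merely a matter of missing bookkeeping.

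The central assertion in your three-distinct-eigenvalue case is that ``$B$ must permute the one-dimensional eigenspaces of $A$ monomially in order to satisfy $ABA = \lambda BAB$ and have the same spectrum as $A$''. This is false: the braid relation together with conjugacy of $A$ and $B$ does \emph{not} force $B$ to be monomial in an eigenbasis of $A$. The reduced Burau representation of $B_3$ already gives, in dimension $2$, two conjugate matrices with distinct eigenvalues satisfying the braid relation which are not simultaneously monomial (the representation is irreducible). The same phenomenon persists in dimension $3$. So your ``main obstacle'' subcase does not get off the ground, and the same error infects the $n=2$ diagonalisable case: knowing that $C$ is monomial in the eigenbasis of $A$ and braids with $B$ does not pin $B$ down to monomial form either, for the same reason.

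The paper avoids this trap by choosing the auxiliary twists differently. Rather than working directly with the braiding pair $T,S$ and one further twist, it fixes $T$ and looks at a pair $P,Q$ of Dehn twists which \emph{both commute with $T$} and braid with each other. Commutation with $T$ (up to scalar) genuinely forces $\widehat P$ and $\widehat Q$ to preserve the generalised eigenspace flag of $\widehat T$, putting both into a common restricted (block-triangular or monomial) shape. Only then does the braid relation between $\widehat P$ and $\widehat Q$ become effective: two matrices already in that restricted shape satisfying the braid relation are easily shown to commute or coincide. In the harder subcases (notably when $\widehat T$ has a $2$-dimensional eigenspace) the paper brings in up to five twists $P,Q,R,S,T$ along a chain. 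Your choice of three twists $T,S,U$ with only $U$ commuting with $T$ simply does not impose enough structure on $\widehat S$.
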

\begin{proof}
 We will proceed identically to \cite[Proposition 4.3]{Korkmaz2011}. By extending scalars we may assume that $\F$ is algebraically closed. This allows us to use the Jordan normal form.

 Because we are working with projective representations, we need to look at the cases $n=2$ and $n=3$ separately (the case $n=1$ is trivial, since $\PGL_1(\F)$ is the trivial group).

 Throughout, we will use Dehn twists $P,Q,R,S$, and $T$ supported on the curves depicted in \cref{surface2}. For any of these Dehn twists, say $X$, we denote by $\widehat X$ some lift of $\phi(X)$ to $\GL_n(\F)$. Such lifts are unique up to multiplication by a scalar matrix.

 In general, our aim is to show that $\phi(T)=1$ or that $\phi(T)$ commutes with $\phi(S)$ or that $\phi(P)$ commutes with $\phi(Q)$. In either of these cases, we deduce the fact that $\im \phi$ is abelian from \cref{killing dehn twists}.

 \medskip \textbf{Assume that $n=2$.} 
 Let us list the three possible Jordan normal forms of $\widehat T$ (we label the cases using Greek letters):
 \begin{align*}
   (\alpha) & & \left( \begin{array}{cc}
                \lambda & 0 \\
                0 & \lambda
               \end{array} \right),
& & (\beta) & & \left( \begin{array}{cc}
                \lambda & 1 \\
                0 & \lambda
               \end{array} \right),
& & (\gamma) & & \left( \begin{array}{cc}
                \lambda & 0 \\
                0 & \mu
               \end{array} \right)
 \end{align*}
 where $\mu \neq \lambda$. We fix a basis for $\F^2$ so that $\widehat T$ is precisely as depicted above.

 In case $(\alpha)$ we have $\phi(T) = 1$, and so we are done.

 \smallskip
 In case $(\beta)$ consider $T, P$ and $Q$. Since $T$ commutes with $P$ and $Q$, we know that $\widehat P$ and $\widehat Q$  permute the eigenspaces of $\widehat T$. But $\widehat T$ has a unique eigenspace, and so $\widehat P$ and $\widehat Q$ both preserve this eigenspace, and so are upper-triangular. Since $\widehat P$, $\widehat Q$ and $\widehat T$ are all conjugate, we see that every diagonal entry of $\widehat P$ and $\widehat Q$ is equal to $\lambda$ (we might have to change the lifts $\widehat P$ and $\widehat Q$ for this).  Thus  $\widehat P$ and $\widehat Q$ commute (this follows from the obvious direct calculation).

 \smallskip
 In case $(\gamma)$ we obtain a homomorphism $\rho \colon C_{\mcg{2}{b}}(T) \to 2$ where the codomain is the group of permutations of the eigenspaces of $\widehat T$ (note that $2$ stands for the cyclic group of order two; this is standard notation in finite groups theory). Using the braid relation $PQP=QPQ$ we see that the images $\rho(P)$ and $\rho(Q)$ coincide.

 If $\rho(P)$ is trivial, then $\widehat P$ and $\widehat Q$ are diagonal (as they preserve the eigenspaces of $\widehat T$), and therefore they commute.

 Suppose that $\rho(P)$ is not trivial. Note that this non-trivial permutation is induced by multiplying the eigenvalues of $\widehat T$ by $\omega \in \F$, where $[\widehat T, \widehat P] = \omega \I$, and $\omega^2=1$. By changing the lift $\widehat T$ we may assume that
 \[
  \widehat T = \left( \begin{array}{cc} \omega & 0 \\ 0 & 1 \end{array} \right)
 \]
 Now, as $\rho(P) \neq 1$, both $\widehat P$ and $\widehat Q$ must be of the form
 \[
  \left( \begin{array}{cc} 0 & \ast  \\  \ast & 0 \end{array} \right)
 \]
Changing the lifts $\widehat P$ and $\widehat Q$ so that, in particular, both matrices have determinant $\omega$ (which can be achieved by taking conjugates of $\widehat T$), we may write
 \[
  \widehat P =  \left( \begin{array}{cc} 0 & -\omega  \\  1 & 0 \end{array} \right) = \widehat Q
 \]
Now $\widehat P$ and $\widehat Q$ commute. This finishes the case $n=2$.

\medskip \noindent \textbf{Assume that $n=3$.}
This case is more involved, since there are six possibilities for the Jordan normal form of $\widehat T$. These are as follows:
\begin{align*}
   (i) & & \left( \begin{array}{ccc}
                \lambda & 0 & 0\\
                0 & \mu & 0 \\
                0 & 0 & \nu
               \end{array} \right),
& & (ii) & & \left( \begin{array}{ccc}
                \lambda & 0 & 0\\
                0 & \lambda & 0 \\
                0 & 0 & \lambda
               \end{array} \right),
& & (iii) & & \left( \begin{array}{ccc}
                \lambda & 0 & 0\\
                0 & \mu & 1 \\
                0 & 0 & \mu
               \end{array} \right),\\
   (iv) & & \left( \begin{array}{ccc}
                \lambda & 1 & 0\\
                0 & \lambda & 1 \\
                0 & 0 & \lambda
               \end{array} \right),
& & (v) & & \left( \begin{array}{ccc}
                \lambda & 0 & 0\\
                0 & \lambda & 1 \\
                0 & 0 & \lambda
               \end{array} \right),
& & (vi) & & \left( \begin{array}{ccc}
                \lambda & 0 & 0\\
                0 & \lambda & 0 \\
                0 & 0 & \mu
               \end{array} \right)\phantom{,}
 \end{align*}
 where $\lambda, \mu$ and $\nu$ are pairwise distinct. Again, we pick a basis for $\F^3$ so that $\widehat T$ is as depicted.

 Case $(i)$ is very similar to the case $(\gamma)$ above: we have a homomorphism $\rho$ from $C_{\mcg{2}{g}}(T)$ to the group of permutations of the eigenspaces of $\widehat T$. Since these permutations are given by multiplying the eigenvalues of $\widehat T$ by some $\omega \in \F$, we see that the image of our homomorphisms consists of $3$-cycles only and $\omega^3=1$. Therefore, using the braid relation between $P$ and $Q$ again, we see that $P$ and $Q$ have the same image $\rho(P)$ under this homomorphism.

 If $\rho(P)=1$ then $\widehat P$ and $\widehat Q$ are diagonal, and hence commute.

 If $\rho(P) \neq 1$ then we may take
 \[
  \widehat T = \left( \begin{array}{ccc}
                \omega^2 & 0 & 0\\
                0 & \omega & 0 \\
                0 & 0 & 1
               \end{array} \right)
 \]
and, without loss of generality,
\[
 \widehat P = \left( \begin{array}{ccc}
                0 & \delta & 0\\
                0 & 0 & \delta^{-1} \\
                1 & 0 & 0
               \end{array} \right), \
\widehat Q = \left( \begin{array}{ccc}
                0 & \epsilon & 0\\
                0 & 0 & \epsilon^{-1} \\
                1 & 0 & 0
               \end{array} \right)
\]
where $\delta, \epsilon \in \F \s- \{0\}$.
A direct computation shows that
\[
 \widehat P \widehat Q \widehat P = \left( \begin{array}{ccc}
                \delta \epsilon^{-1} & 0 & 0\\
                0 & 1 & 0 \\
                0 & 0 & \epsilon \delta^{-1}
               \end{array} \right), \ \widehat Q \widehat P \widehat Q = \left( \begin{array}{ccc}
                \delta^{-1} \epsilon & 0 & 0\\
                0 & 1 & 0 \\
                0 & 0 & \epsilon^{-1} \delta
               \end{array} \right)
\]
Thus, the braid relation between $\widehat P$ and $\widehat Q$ holds (a priori, it only had to hold up to multiplication by a scalar matrix), and informs us that $\epsilon^{-1} \delta = \delta^{-1} \epsilon$.
Therefore we have
\[
 \widehat P \widehat Q^{-1} = \left( \begin{array}{ccc}
                \delta \epsilon^{-1} & 0 & 0\\
                0 & \delta^{-1} \epsilon & 0 \\
                0 & 0 & 1
               \end{array} \right) = \left( \begin{array}{ccc}
                \delta \epsilon^{-1} & 0 & 0\\
                0 & \delta \epsilon^{-1} & 0 \\
                0 & 0 & 1
               \end{array} \right)
\]
If $\delta \epsilon^{-1}  =1$ then $\widehat P = \widehat Q$, and so we may apply \cref{killing dehn twists}. Otherwise, the eigenspaces of $\widehat P \widehat Q^{-1}$ cannot be permuted (as they have different dimensions). Therefore $\widehat S$ (up to multiplication by a scalar) has to be of the form
\[
 \left( \begin{array}{ccc}
                \ast & \ast & 0\\
                \ast & \ast & 0 \\
                0 & 0 & 1
               \end{array} \right)
\]
since $S$ and $PQ^{-1}$ commute in $\mcg{2}{b}$. Now, $\widehat P$ has to permute the eigenspaces of $\widehat S$, and so $\widehat S$ must be diagonal. But then $\widehat S$ commutes with $\widehat T$.

\smallskip
In the cases $(ii)$--$(vi)$, the eigenspaces of $\widehat T$, as well as the  more general kernels of matrices of the form $(\widehat T - \kappa \I)^n$, cannot be permuted. Thus, every lift $\widehat X$ of every element $X$  commuting in $\mcg{2}{b}$ with $T$ satisfies $[\widehat T, \widehat X]=1$ in $\GL_3(\F)$.

Let us now suppose that we are in the case $(ii)$. Here $\phi(T)=1$.

\smallskip
Let us consider the case $(iii)$. Since $P$ commutes with $T$, and by the remark above, $\widehat P$ commutes with $\widehat T -\lambda \I, \widehat T - \mu \I$, and $(\widehat T -\mu \I)^2$. Thus, $\widehat P$ (and also $\widehat Q$) is of the form
\[
 \left( \begin{array}{ccc}
                \ast & 0 & 0\\
                0 & \ast & \ast \\
                0 & 0 & \ast
               \end{array} \right)
\]
Since we know the Jordan normal form for $\widehat P$ (up to scalar multiplication), we see that $\widehat P$ (up to scalar multiplication again) is of the form
\[
 \left( \begin{array}{ccc}
                \lambda & 0 & 0\\
                0 & \mu & \ast \\
                0 & 0 & \mu
               \end{array} \right)
\]
and the same is true for $\widehat Q$. It now immediately follows from the obvious calculation that $\widehat P$ and $\widehat Q$ commute.

\smallskip
In case $(iv)$ we argue similarly, and see that $\widehat P$ and $\widehat Q$ must be upper-triangular. Since these matrices can only have one eigenvalue, we choose the lifts $\widehat P$ and $\widehat Q$ so that the matrices are unipotent. Thus, the braid relation between $\widehat P$ and $\widehat Q$ holds, and immediately shows that $\widehat P = \widehat Q$ (via a direct calculation), which finishes this case.

\smallskip Case $(v)$ is a little more involved, and we have to also use $\widehat S$. We choose $\widehat T$ and $\widehat S$ in such a way that their only eigenvalue is $1$. If the $1$-eigenspaces of $\widehat S$ and $\widehat T$ do not coincide, then $\widehat P$ and $\widehat Q$ have to be upper triangular (with respect to a suitable basis), since they have to preserve both the $1$-eigenspace of $\widehat T$ and of $\widehat S$. We then argue as in the previous case.

If the $1$-eigenspaces of $\widehat T$ and $\widehat S$ coincide, then $\widehat S$ is of the form
\[
 \left( \begin{array}{ccc}
                1 & 0 & 0\\
                \ast & 1 & \ast \\
                0 & 0 & 1
               \end{array} \right)
\]
A direct computation verifies that $\widehat T$ and $\widehat S$ commute.

\smallskip We are left with the case $(vi)$. We start precisely as in case $(v)$: choose lifts $\widehat T$ and $\widehat S$ in such a way that their eigenvalue with $2$-dimensional eigenspace is $1$. If these eigenspaces do not coincide, then $\widehat P$ and $\widehat Q$ are (after a change of basis of the $1$-eigenspace of $\widehat T$) of the form
\[
\left( \begin{array}{ccc}
                \ast & \ast & 0 \\
                0 & \ast & 0 \\
                0 & 0 & \ast
               \end{array} \right)
\]
Since $\widehat R$ preserves the eigenspaces of $\widehat T$ and $\widehat P$, the matrix $\widehat R$ is also of the form depicted above.

Up to possibly taking different lifts $\widehat P$, $\widehat Q$, and $\widehat R$, we know that the diagonal entries are $1,1$ and $\mu$, in some order. Using the braid relations between the pairs $P,Q$ and $Q,R$, we first see that the braid relations hold also for $\widehat P, \widehat Q$ and $\widehat Q, \widehat R$, and then we see that these diagonal entries appear in the same order in all three matrices $\widehat P$, $\widehat Q$, and $\widehat R$.

If the first two entries are both equal to $1$, then it is immediate (as before) that $\widehat P$ and $\widehat Q$ commute. Otherwise, the fact that $\widehat P$ and $\widehat Q$ satisfy the braid relation implies that $\widehat P = \widehat Q$ or that $\mu$ satisfies the equation $\mu^2-\mu-1=0$ (this follows from directly computing the two sides of the braid relation $\widehat P \widehat Q \widehat P = \widehat Q \widehat P \widehat Q$). In the latter case, the equation  $\mu^2-\mu-1=0$ guarantees that the braid relation holds also for the pair $\widehat P, \widehat R$. But $\widehat P$ and $\widehat R$ commute, and so $\phi(P) = \phi(R)$. Thus $\phi(S)$ commutes with $\phi(R)$, and we are done by \cref{killing dehn twists}.

We are left with the situation in which the $2$-dimensional eigenspaces of $\widehat T$ and $\widehat S$ coincide. Then we have $\widehat S$ of the form
\[
\left( \begin{array}{ccc}
                1 & 0 & \ast \\
                0 & 1 & \ast \\
                0 & 0 & \mu
               \end{array} \right)
\]
Now, arguing as above, we see that $\widehat S = \widehat T$ (in which case we are done), or $\mu$ satisfies the equation $\mu^2-\mu-1=0$. Since the pair $(T,S)$ can be conjugated in $\mcg{g}{b}$ to the pair $(R,S)$, the $2$-dimensional eigenspaces of $\widehat S$ and $\widehat R$ coincide. Thus, $\widehat R$ is of same form as $\widehat S$ (depicted above), and the equation $\mu^2-\mu-1=0$ tells us that the braid relation between $\widehat R$ and $\widehat T$ holds. But $\widehat R$ and $\widehat T$ commute, which forces $\widehat R = \widehat T$. This finishes the proof.
\end{proof}

\begin{prop}
\label{rep trick}
Let $V$ be a
vector space of finite dimension $n\geqslant 3$ over an algebraically closed field.
 Let $X \in \GL(V)$ be any matrix. Suppose that $X$ does not admit an eigenspace of dimension at least $n-1$. Then there exist subspaces $U$ and $U'$ of $V$ such that all of the following hold:
 \begin{enumerate}
  \item $U \leqslant U'$;
  \item the dimensions $\dim U, \dim U'/U$, and $\dim V/U'$ are at most $n-2$;
  \item both $U$ and $U'$ are preserved by $C_{\GL(V)}(X)$.
 \end{enumerate}
\end{prop}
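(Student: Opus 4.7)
The plan is to exploit the fact that $C_{\GL(V)}(X)$ preserves every subspace of the form $\ker(X-\lambda\I)^k$: any $g$ commuting with $X$ also commutes with $(X-\lambda\I)^k$, and hence stabilises its kernel. Since the ground field is algebraically closed, the generalised eigenspace decomposition $V=\bigoplus_\lambda V_\lambda$ with $V_\lambda=\ker(X-\lambda\I)^n$ is available, and each $V_\lambda$ as well as each $\ker(X-\lambda\I)^k\subseteq V_\lambda$ is $C_{\GL(V)}(X)$-invariant. I would then proceed by a case analysis on the number of distinct eigenvalues of $X$.

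If $X$ has at least three eigenvalues, pick three of them $\lambda_1,\lambda_2,\lambda_3$ and set $U:=V_{\lambda_1}$ and $U':=V_{\lambda_1}\oplus V_{\lambda_2}$. The three jumps $\dim V_{\lambda_1}$, $\dim V_{\lambda_2}$, and $\sum_{\lambda\neq\lambda_1,\lambda_2}\dim V_\lambda$ are positive integers summing to $n$, so each is at most $n-2$. Suppose instead that $X$ has exactly two eigenvalues $\lambda_1,\lambda_2$; after relabelling assume $a:=\dim V_{\lambda_1}\leqslant\dim V_{\lambda_2}=:b$, and set $U:=V_{\lambda_1}$ together with $U':=V_{\lambda_1}\oplus\ker(X-\lambda_2\I)$. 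Writing $c:=\dim\ker(X-\lambda_2\I)$, the three jumps are $a$, $c$, and $b-c$. The hypothesis bounds $c\leqslant n-2$, the non-triviality of the $\lambda_2$-eigenspace gives $c\geqslant 1$ and hence $b-c\leqslant b-1\leqslant n-2$, while $a\leqslant\lfloor n/2\rfloor\leqslant n-2$ (for $n=3$ the ordering forces $a=1=n-2$).

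In the remaining case $X$ has a single eigenvalue $\lambda$. Let $N:=X-\lambda\I$, which is a nonzero nilpotent endomorphism (if $N=0$ then $\ker N=V$, contradicting the hypothesis), and note that $C_{\GL(V)}(X)=C_{\GL(V)}(N)$ preserves every $\ker N^k$. The hypothesis then gives $1\leqslant\dim\ker N\leqslant n-2$. If $\dim\ker N\geqslant 2$, I take $U:=0$ and $U':=\ker N$, with jumps $0$, $\dim\ker N$, and $n-\dim\ker N$ all lying in $\{0\}\cup\{2,\ldots,n-2\}$. If instead $\dim\ker N=1$ then $X$ consists of a single Jordan block and $\dim\ker N^k=k$ for $0\leqslant k\leqslant n$, so $U:=\ker N$ and $U':=\ker N^{n-1}$ produce jumps $1$, $n-2$, and $1$, each at most $n-2$ by $n\geqslant 3$. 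I do not foresee a substantive obstacle: the proof is a finite case analysis, and the only real care needed is in the two-eigenvalue case, where one must combine a generalised eigenspace and an ordinary eigenspace to secure the bound $a\leqslant n-2$ even when one generalised eigenspace has dimension as large as $n-1$.
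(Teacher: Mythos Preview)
Your proof is correct and follows essentially the same approach as the paper: a case analysis on the number of distinct eigenvalues of $X$, using that subspaces of the form $\ker(X-\lambda\I)^k$ are $C_{\GL(V)}(X)$-invariant. The only cosmetic differences are that the paper works with ordinary eigenspaces throughout (splitting the two-eigenvalue case into $\dim E_X^\mu=1$ versus $\dim E_X^\mu\geqslant 2$, and taking $U=U'=E_X^\lambda$ or $U'=\ker(X-\lambda\I)^2$ in the single-eigenvalue case), whereas you mix generalised and ordinary eigenspaces to avoid a sub-case and allow $U=0$; none of this affects the substance.
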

\begin{proof}
 Let $\chi_X$ denote the characteristic polynomial of $X$.
 For a root $\lambda$ of $\chi_X$ we denote the corresponding eigenspace by $E_X^\lambda$.

Suppose that $\chi_X$ has only one root $\lambda$.
If $\dim E_X^\lambda = 1$ then the kernel of $(X - \lambda \I)^2$ has dimension $2$, and we are done by taking $U = E_X^\lambda$ and $U' = \ker (X - \lambda \I)^2$. Otherwise, we take $U = U' =E_X^\lambda$ -- recall that that $\dim U < n-1$ by assumption.

Suppose now that $\chi_X$ has exactly two distinct roots, $\lambda$ and $\mu$. For concreteness, let us assume that $\dim E_X^\mu \leqslant \dim E_X^\lambda$.
If $\dim E_X^\mu =1$ then $E_X^\mu \oplus E_X^\lambda \neq V$, since $\dim E_X^\lambda < n-1$ by assumption. Therefore we may take $U = E_X^\mu, U' = E_X^\mu \oplus E_X^\lambda$.

If $\dim E_X^\mu > 1$ then take
$U = U' = E_X^\mu$; we have $\dim E_X^\mu \leqslant \lfloor \frac n 2 \rfloor \leqslant n-2$, and so the dimensions are as required.

We are left with the possibility that $\chi_X$ has at least $3$ distinct roots, say $\lambda_1, \lambda_2$ and $\lambda_3$. 
 We may take $U = E_X^{\lambda_1}$ and $U' = E_X^{\lambda_1} \oplus E_X^{\lambda_2}$.
\end{proof}

Again, we follow the ideas of Korkmaz and deduce the following.

\begin{thm}
\label{main thm reps}
 Every projective representation of $\mcg{g}{b}$ with $g\geqslant 3$ and $b \geqslant 0$ in dimension less than $2g$ is trivial.
\end{thm}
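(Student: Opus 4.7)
The plan is to induct on $g$, with base of induction at $g = 2$ provided by \cref{reps base case}: in dimension at most $3$ every projective representation of $\mcg{2}{b}$ has abelian image, and hence restricts to a trivial representation on the perfect derived subgroup guaranteed by \cref{perfect}. Throughout, we extend scalars so that $\F$ is algebraically closed, and we write $V = \F^n$ with $n < 2g$ and $g \geqslant 3$.

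Fix a non-separating simple closed curve $c$ on $\Sigma_{g,b}$, the associated Dehn twist $T = T_c$, and a lift $\widehat T \in \GL(V)$ of $\phi(T)$. Because $\mcg{g}{b}$ is perfect for $g \geqslant 3$ (\cref{perfect}), the theorem follows from \cref{killing dehn twists} as soon as we show that $\widehat T$ is a scalar matrix.

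The argument splits according to whether or not $\widehat T$ admits an eigenspace of dimension at least $n - 1$. If it does, we handle the case directly, in the spirit of the proof of \cref{reps base case}: the Jordan form of $\widehat T$ is very restricted, the resulting invariant decomposition has pieces of distinct dimensions (so cannot be permuted by $\phi$-images of elements commuting with $T$), every $\phi$-image of a non-separating Dehn twist shares the Jordan type of $\widehat T$ by conjugacy in $\mcg{g}{b}$, and the braid relation $PQP = QPQ$ applied to a suitable pair $P,Q$ of Dehn twists pins $\widehat T$ down to a scalar. Otherwise, \cref{rep trick} produces a flag $0 \leqslant U \leqslant U' \leqslant V$ preserved by $\phi(C_{\mcg{g}{b}}(T))$ with all three graded pieces of dimension at most $n - 2 < 2(g-1)$.

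The centraliser $C_{\mcg{g}{b}}(T)$ contains, via the inclusion of the mapping class group of the surface obtained by cutting along $c$, a copy $M$ of $\mcg{g-1}{b+2}$. Set $H := M$ when $g \geqslant 4$ and $H := M'$ when $g = 3$; in either case $H$ is perfect (the latter by \cref{perfect}). Restricting $\phi$ to $H$ and composing with the three natural projections onto the graded pieces of the flag yields three projective representations of $H$ of dimension strictly less than $2(g-1)$, each of which is trivial -- by the inductive hypothesis when $g \geqslant 4$, and by \cref{reps base case} combined with \cref{perfect} when $g = 3$. Consequently, any lift of $\phi(H)$ to $\GL(V)$ is block-upper-triangular with scalar diagonal blocks, so $\phi(H)$ lies modulo scalars in a solvable subgroup of $\PGL(V)$; perfectness of $H$ then forces $\phi(H) = 1$. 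Since $H$ contains Dehn twists (or, in the $g=3$ case, suitable powers thereof) along curves that are non-separating in $\Sigma_{g,b}$, and all non-separating Dehn twists in $\mcg{g}{b}$ are conjugate, we conclude that $\phi(T) = 1$.

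The main technical obstacle is the boundary case $g = 3$: there $\mcg{2}{b+2}$ fails to be perfect, so $H$ must be taken to be its derived subgroup, which does not itself contain the non-separating Dehn twists of $\Sigma_{g,b}$; a slightly finer analysis -- combining the residual torsion information about $\phi(T)$ with the flag structure produced by \cref{rep trick} -- is then required to conclude.
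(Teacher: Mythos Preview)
Your overall architecture matches the paper's: induction on $g$, with \cref{reps base case} as the base, and \cref{rep trick} driving the inductive step. However, there is a real gap in the case where $\widehat T$ has an eigenspace of dimension at least $n-1$. Your claim that ``the braid relation $PQP=QPQ$ \dots\ pins $\widehat T$ down to a scalar'' is not true as stated: the symplectic representation $\mcg{g}{b}\to\sp_{2g}(\F)$ sends every non-separating Dehn twist to a transvection, which is not scalar, has a codimension-$1$ eigenspace, and satisfies all the braid and commutation relations; so in dimension $2g$ nothing like this argument can work, and there is no reason it should suddenly succeed in dimension $2g-1$. The paper does not try to force $\widehat T$ to be scalar directly. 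Instead it works with $\widehat T\widehat S^{-1}$ (where $S=T_1'$), for two reasons. First, since $T$ and $S$ are conjugate one has $h_T=h_S$, so lifts of elements of $\mcgop(\Gamma)$ genuinely commute with $\widehat T\widehat S^{-1}$ in $\GL(V)$, and \cref{rep trick} applies without any caveat about perfectness. Second, the degenerate case ``$\widehat T\widehat S^{-1}$ has a codimension-$0$ eigenspace'' is exactly $\phi(T)=\phi(S)$, which is what \cref{killing dehn twists} needs. The remaining codimension-$1$ case is then handled by a \emph{secondary induction on $n$}: one shows that the codimension-$1$ eigenspace $W$ is in fact preserved by all of $\mcg{g}{b}$ (not just by $C(T)$), and applies the theorem to $W$.

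There is also a smaller omission in your generic case. You assert that the flag produced by \cref{rep trick} is ``preserved by $\phi\big(C_{\mcg{g}{b}}(T)\big)$'', but \cref{rep trick} only gives invariance under $C_{\GL(V)}(\widehat T)$. Elements of $C_{\mcg{g}{b}}(T)$ lift to matrices commuting with $\widehat T$ only up to scalar, and such matrices can permute the eigenspaces of $\widehat T$. What saves you is that the permutation is governed by a homomorphism $h_T\colon C_{\mcg{g}{b}}(T)\to\F^\times$, which vanishes on your perfect subgroup $H$; this point deserves to be made explicit (and for $g=3$ it is precisely why you must pass to $H=M'$). Finally, your $g=3$ endgame is easy to complete once you observe that $P$ and $Q$ are conjugate in $M$, so $PQ^{-1}\in M'=H$; hence $\phi(P)=\phi(Q)$ and \cref{killing dehn twists} finishes the job --- no ``residual torsion information'' is needed.
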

\begin{proof}
Since we can extend scalars, we may assume that we are working over an algebraically closed field $\F$. Let $V$ denote the vector space  $\F^n$ with $n<2g$, and let $\phi \colon \mcg{g}{b} \to \PGL(V)$ be a projective representation.

Our proof is an induction on $g$, and a secondary induction on $n$. Note that when $n=1$ then $\PGL(V)$ is the trivial group, and the result follows.



To ease notation, set $T = T_1$ and $S = T_1'$ (see \cref{surface}). Let $\Gamma$ denote a subsurface of $\Sigma$ homeomorphic to $\Sigma_{g-1,b+1}$ which intersects the supporting curves of $T$ and $S$ trivially, but contains the support of the Dehn twists $T_i$ and $T_i'$ for $i>1$.
For every $X \in \mcg g b$ we choose a lift $\widehat X$ of $\phi(X)$ in $\GL(V)$.

Observe that for every $X \in \mcgop(\Gamma)$ we have $[\widehat T, \widehat X] = \lambda_X \I$ with $\lambda_X \in \F^\times$, and $\lambda_X$ being clearly independent of the choice of $\widehat X$ and $\widehat T$.
Define $h_T \colon \mcgop(\Gamma) \to \F^\times$ by $X \mapsto \lambda_X$. The commutator relation
\[
 [Z, XY] = [Z,X] X [Z,Y] X^{-1}
\]
immediately shows that $h_T$ is a homomorphism.


Note that when $g \geqslant 4$ then $h_T$ is trivial, since in this case $h_T$ is a homomorphism from a perfect group $\mcg {g-1}{b+1}$ (see \cref{perfect}) to an abelian group.
We are however also interested in the case $g=3$, and so we will need to worry about $h_T$ being non-trivial in the course of the proof.

We start by investigating the eigenspaces of $\widehat T \widehat {S}^{-1}$ in $V$.
Suppose first that $\widehat T \widehat {S}^{-1}$ does not admit an eigenspace of codimension at least $1$. This immediately implies that $n \geqslant 3$, and so we may apply  \cref{rep trick} to $X = \widehat T \widehat {S}^{-1}$. In this case, \cref{rep trick} gives us non-trivial subspaces $U$ and $U'$ of $V$ of codimension at least $2$.
The subspace $U$ is preserved by $\widehat X$ for every $X \in C_{\mcg g b}(TS^{-1})$, since $h_T = h_S$, as $S$ and $T$ are conjugate, and so $[\widehat T \widehat S^{-1}, \widehat X] = h_T(X) h_S(X)^{-1} \I = \I$.

When $g=3$, we apply \cref{reps base case} to the induced projective representations $U, U'/U$ and $V/U'$ of $\mcgop(\Gamma) \cong \mcg 2 {b+1}$. We conclude that these representations have abelian images, and so are trivial when restricted to the derived subgroup of $\mcgop(\Gamma)$. Hence $\phi$ restricted to this derived subgroup has nilpotent image. But the derived subgroup is perfect by \cref{perfect}, and so $\phi$ is trivial on the derived subgroup of $\mcgop(\Gamma)$. This implies that $\phi(PQ^{-1})=1$ (as $P$ and $Q$ are conjugate), and so $\phi$ is trivial by \cref{killing dehn twists,perfect}.

Now suppose that $g > 3$.
The inductive hypothesis tells us that the restricted projective representations
$U, U'/U$, and $V/U'$ of $\mcgop(\Gamma)$ are trivial.
Hence, $\phi$ restricted to $\mcgop(\Gamma)$ has nilpotent image. But $\mcgop(\Gamma) \cong \mcg{g-1}{b+1}$ is perfect by \cref{perfect}, and so $\phi$ is trivial on $\mcgop(\Gamma)$, and hence $\phi(T_2) = 1$. But $T_2 \in \mathcal S$, and the Dehn twists in $\mathcal S$ are pairwise conjugate and generate $\mcg  g b$, and so $\phi=1$.

\smallskip
Now suppose that $\widehat T \widehat S^{-1}$ admits an eigenspace $W$ of codimension at most $1$.
If the codimension is $0$, then $\phi(T) = \phi(S)$ and we are done by \cref{killing dehn twists}. Hence let us suppose that the codimension is equal to $1$.
Let $W'$ denote the eigenspace of codimension $1$ for $\widehat S^{-1} \widehat T$, which exists since $\widehat S^{-1} \widehat T$ and $ \widehat T \widehat S^{-1}$ are conjugate.

If $W \cap W' \neq W$, then we put $U = W \cap W'$ and $U' = W$ in the previous argument.

If $W = W'$ then we see that $W$ is also preserved by $\widehat S$, as $\widehat S^{-1} \widehat T$ and $\widehat T \widehat S^{-1}$ are related by the conjugation by $\widehat S$. Hence $\widehat T = \widehat S \cdot \widehat S^{-1} \widehat T$ preserves $W$.

Since $W$ is of codimension $1$, we can choose a basis for $V$ with exactly one vector, say $v$, lying outside of $
W$. Since $[\widehat T, \widehat P](v) = v + w$ with $w \in W$, we conclude that $h_T$ is trivial. Therefore the eigenspaces of $\widehat T$ are preserved by $\phi\big(\mcgop(\Gamma)\big)$.

Suppose that $\widehat T\vert_W$ is not central in $\GL(W)$. Then $\widehat T\vert_W$ has some non-trivial proper eigenspace $U$ in $W$. Now we can take $U=U$ and $U' = W$ in the previous argument.

We are left with the case of $\widehat T$ being central on $W$. If $\widehat T$ is central on $V$ then we are done by \cref{killing dehn twists}. Otherwise, $W$ is precisely an eigenspace of $\widehat T$. Using an analogous argument we show that it is also an eigenspace of $\widehat S$. As an eigenspace of $\widehat T$, the subspace $W$ is preserved by the image under $\phi$ of all elements in $\mcg{g}{b}$ which commute with $T$. But $\mcg{g}{b}$ is generated by such elements and $S$, and hence $W$ is a projective representation of $\mcg{g}{b}$ of smaller dimension than $V$. By induction, $W$ is the trivial projective representation. So is $V/W$, as it is one dimensional. We now finish the argument as before, using that every nilpotent quotient of $\mcg g b$ is trivial as $\mcg g b$ is perfect.
\end{proof}

\section{Minimal finite quotients of \texorpdfstring{$\mcg{g}{b}$}{MCG(Sigma g,b)}}

\subsection{Finite groups of Lie type}

Here we give an extremely rudimentary introduction to the finite groups of Lie type, restricted almost exclusively to the few facts and statements that we will require. For further details the reader is encouraged to consult the book of Gorenstein--Lyons--Solomon~\cite{Gorensteinetal1998}.

Note that we will denote the cyclic group of prime order $p$ by $p$. This is standard notation in finite group theory.

Let $r$ be a prime, and $q$ a power thereof.
The finite groups of Lie type over the field of $q$ elements are divided into finite collections (\emph{types}), each corresponding to a Dynkin diagram or a twisted Dynkin diagram;
the types $\A_n, \Atwo_n, \B_n,\typeC_n, \D_n$ and $\Dtwo_n$ are called \emph{classical}, and the types $\Btwo_2$, $\Dthree_4$, $\E_6$, $\Etwo_6$, $\E_7$, $\E_8$, $\typeF_4$, $\typeFtwo_4$, $\G_2$ and
$\Gtwo_2$ are called \emph{exceptional}.

As mentioned above, to each type we associate a finite family of finite groups; the members of such a family are called \emph{versions}. There are two special versions: the \emph{universal} one, which has the property that it maps homomorphically onto every other version with a central kernel, and the \emph{adjoint} version, which is the homomorphic image of every other version where the corresponding kernel is again central. The adjoint versions are simple with the following exceptions \cite[Chapter 3.5]{atlas}:
\begin{alignat*}{4}
\A_1(2) &\cong \Sym_3, & \A_1(3) &\cong \Alt_4, & \typeC_2(2) &\cong \Sym_6, &
\Atwo_2(2) &\cong 3^2 \rtimes Q_8, \\
\G_2(2) &\cong \Atwo_2(3) \rtimes 2,\phantom{xx} & \Btwo_2(2) &\cong 5 \rtimes 4,\phantom{xx} &
\Gtwo_2(3) &\cong \A_1(8) \rtimes 3,\phantom{xx} & \typeFtwo_4(2)
\end{alignat*}
where $Q_8$ denotes the quaternion group of order $8$, and $4$ denotes the cyclic group of that order. The group $\typeFtwo_4(2)$ contains an index $2$ subgroup $\mathrm{T} = \typeFtwo_4(2)'$, known as the Tits group, which is simple. For the purpose of this paper, we treat $\mathrm{T}$ as a finite group of Lie type.

Each finite group of Lie type has a \emph{rank}, which is simply the number of vertices of the corresponding Dynkin diagram (or, equivalently, the index appearing as a subscript of the type).

Groups of types $\Btwo_2$ and $\typeFtwo_4$ are defined only over fields of order $2^{2m+1}$ while groups of type $\Gtwo_2$ are defined only over fields of order $3^{2m+1}$. All groups of all other types are defined over all finite fields.

For reference, we also recall the following additional exceptional isomorphisms
\[\A_1(4) \cong \A_1(5) \cong \Alt_5, \; \; \A_1(9) \cong \Alt_6, \; \; \A_1(7) \cong \A_2(2), \; \; \A_3(2) \cong \Alt_8, \; \; \Atwo_3(2) \cong \typeC_2(3)\]
In addition, $\B_n(2^m) \cong \typeC_n(2^m)$ for all $n \geqslant 3$ and $m \geqslant 1$.


The adjoint version of a classical group over $q$ comes with a natural projective module over an algebraically closed field in characteristic $r$; the dimensions of these modules are taken from \cite[Table 5.4.C]{KleidmanLiebeck1990} and listed in \cref{dynkinclassical}. Note that these projective modules are irreducible. \cref{dynkinclassical} lists also the necessary conditions on the ranks for a given type.

The Dynkin diagrams $\B_2$ and $\typeC_2$ coincide, and so we do not talk about groups of type $\typeC_2(q)$. However, when it comes to finding the smallest projective modules, one should consider the adjoint version of $\B_2(q)$ as the projective symplectic group $\mathrm{S}_{4}(q)$, rather then the projective orthogonal group $\mathrm{O}_{5}(q)$. This technical point will be however of no bearing for us.

\begin{table}
\centering\begin{tabular}{r | c | c |  c}
                        &                                                       &                       &Classical\\
Type            & Conditions                                    & Dimension     &isomorphism \\ \hline \hline
$\A_n(q)$               &  $n\geqslant 1$                               & $n+1$                 &$\mathrm{L}_{n+1}(q)$\\
$\Atwo_n(q)$    &  $n\geqslant 2$        & $n+1$        &U$_{n+1}(q)$\\
$\B_n(q)$               & $n\geqslant 2$                                & $2n+1$                &$\mathrm{O}_{2n+1}(q)$\\
$\typeC_n(q)$   & $n\geqslant 3$                                & $2n$          &$\mathrm{S}_{2n}(q)$\\
$\D_n(q)$       & $n\geqslant 4$                                & $2n$          &$\mathrm{O}_{2n}^+(q)$\\
$\Dtwo_n(q)$    & $n\geqslant 4$                                & $2n$          &$\mathrm{O}_{2n}^-(q)$\\ \hline
\end{tabular}
\caption{The classical groups of Lie type}
\label{dynkinclassical}
\end{table}

\smallskip
A \emph{parabolic} subgroup of $K$ is any subgroup containing a Borel subgroup of $K$, that is the normaliser of a Sylow $r$-subgroup of $K$.

\begin{prop}[Jordan decomposition]
Let $K$ be a finite group of Lie type defined over a field of characteristic $r$. For every element $x \in K$ we have unique elements $x_s, x_u \in K$, such that
\[x = x_s x_u  = x_u x_s\]
and the order of $x_u$ is a power of $r$ ($x_u$ is \emph{unipotent}), and the order of $x_s$ is coprime to $r$ ($x_s$ is \emph{semisimple}).
\end{prop}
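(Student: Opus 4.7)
The plan is to observe that this proposition is really a statement about the cyclic subgroup $\langle x \rangle$: once we can exhibit commuting elements $x_s, x_u \in \langle x \rangle$ with the required orders satisfying $x=x_s x_u$, both existence and uniqueness follow from elementary facts about cyclic groups and the Chinese Remainder Theorem. The Lie-type hypothesis is irrelevant to the abstract assertion; what it will matter for is the geometric interpretation of these elements elsewhere in the paper (e.g.\ $x_s$ being conjugate into a maximal torus and $x_u$ into a Sylow $r$-subgroup), but the existence and uniqueness as group elements is purely arithmetic.

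For existence, let $n=|x|$ and write $n=r^a m$ with $\gcd(r,m)=1$. By B\'ezout there exist integers $\alpha,\beta$ with $\alpha r^{a}+\beta m=1$, and I would set
\[
 x_s := x^{\alpha r^{a}}, \qquad x_u := x^{\beta m}.
\]
Then $x_s x_u = x^{\alpha r^a+\beta m}=x=x_u x_s$ automatically, and a direct order computation (using that $\alpha$ is a unit modulo $m$ and $\beta$ is a unit modulo $r^a$) shows $|x_s|=m$ and $|x_u|=r^a$, giving the required order constraints. This is the entire existence argument and is essentially a line of computation.

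For uniqueness, suppose $x = y_s y_u = y_u y_s$ is another such decomposition. The key observation is that, since $y_s$ and $y_u$ commute and have coprime orders, $\langle y_s,y_u\rangle$ is a cyclic group of order $|y_s|\cdot|y_u|$, and the element $y_sy_u=x$ has exactly this order. Hence $|y_s||y_u|=n$, which forces $|y_s|=m$ and $|y_u|=r^a$, and moreover $\langle x\rangle=\langle y_s,y_u\rangle$. Therefore both $y_s$ and $y_u$ lie in $\langle x\rangle$. Via the isomorphism $\langle x\rangle\cong \Z/r^a\Z\times \Z/m\Z$ furnished by the CRT, the unique subgroup of order $m$ consists of the elements with vanishing $\Z/r^a$-component and the unique subgroup of order $r^a$ consists of the elements with vanishing $\Z/m$-component; since the images of $y_s$ and $y_u$ under this isomorphism must sum to the image of $x$, they are determined, and hence coincide with the $x_s$ and $x_u$ constructed above.

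The only subtlety worth flagging is the step that places $y_s,y_u$ inside $\langle x\rangle$; everything else is routine. Once this is done, the CRT computation makes uniqueness immediate, so I would expect the entire write-up to be short.
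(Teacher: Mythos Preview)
Your proof is correct. The paper does not actually prove this proposition: it is stated without proof, followed immediately by the remark ``In fact, the above works for any element of a finite group, and any prime $r$. We stated it in such a form to emphasise the relation to algebraic groups.'' Your argument is precisely the elementary justification the paper alludes to, and your observation that the Lie-type hypothesis plays no role in the existence and uniqueness matches the paper's own comment verbatim.
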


In fact, the above works for any element of a finite group, and any prime $r$. We stated it in such a form to emphasise the relation to algebraic groups.

We now look at the first of the two structural results about finite groups of Lie type that we will need.

\begin{thm}[Borel--Tits~{\cite[Theorem 3.1.3(a)]{Gorensteinetal1998}}]
\label{borel--tits}
 Let $K$ be a finite group of Lie type in characteristic $r$, and let $R$ be a non-trivial $r$-subgroup of $K$. Then there exists a proper parabolic subgroup $P < K$ such that $R$ lies in the normal $r$-core of $P$, and $N_K(R) \leqslant P$.
\end{thm}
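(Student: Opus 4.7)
The plan is to prove this by passing to algebraic groups, which is the standard route and the one carried out in Gorenstein--Lyons--Solomon. Realise $K$ as (a central quotient of a subgroup of index prime to $r$ in) the fixed points $\mathbf{G}^F$ of a Steinberg endomorphism $F \colon \mathbf{G} \to \mathbf{G}$ on a connected reductive algebraic group $\mathbf{G}$ defined over $\Fbar_r$. By definition, the parabolic subgroups of $K$ correspond to the $F$-stable parabolic subgroups $\mathbf{P} \leqslant \mathbf{G}$ via $P = \mathbf{P}^F$, and the normal $r$-core $O_r(P)$ of such a $P$ coincides with $R_u(\mathbf{P})^F$. Every element of $R$ has $r$-power order, and is therefore unipotent in $\mathbf{G}$.

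The core step is to establish the algebraic version: for any non-trivial closed unipotent subgroup $\mathbf{U} \leqslant \mathbf{G}$ there is a proper parabolic $\mathbf{P} < \mathbf{G}$ with $\mathbf{U} \subseteq R_u(\mathbf{P})$ and $N_{\mathbf{G}}(\mathbf{U}) \subseteq \mathbf{P}$. I would argue by induction on $\dim \mathbf{G}$. By Borel's fixed-point theorem applied to the action of $\mathbf{U}$ on the flag variety $\mathbf{G}/\mathbf{B}$, any maximal closed unipotent subgroup containing $\mathbf{U}$ is the unipotent radical of some Borel $\mathbf{B}$, so $\mathbf{U} \subseteq R_u(\mathbf{B})$ to begin with. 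Next, examine the connected centraliser $C_{\mathbf{G}}(\mathbf{U})^0$. If it contains a non-trivial torus $\mathbf{T}_0$, then $\mathbf{L} := C_{\mathbf{G}}(\mathbf{T}_0)$ is a proper Levi subgroup of a parabolic of $\mathbf{G}$; applying the induction hypothesis inside the strictly smaller $\mathbf{L}$ yields a parabolic $\mathbf{P}_{\mathbf{L}} < \mathbf{L}$ absorbing $\mathbf{U}$ and its normaliser in $\mathbf{L}$, and the parabolic $\mathbf{P} < \mathbf{G}$ determined by this data then does the job. If no such torus exists, then a Bruhat-decomposition argument, to the effect that any element normalising $\mathbf{U}$ cannot move its root support out of the positive system, forces $N_{\mathbf{G}}(\mathbf{U}) \subseteq \mathbf{B}$ directly.

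To descend to the finite group, take $\mathbf{U}$ to be the Zariski closure of $R$, which is $F$-stable since $R$ is, and arrange that the parabolic $\mathbf{P}$ produced above is $F$-stable as well. This follows from a canonicity/minimality argument: choose $\mathbf{P}$ minimal among parabolics satisfying the conclusion for $\mathbf{U}$, and use that the entire construction is Galois-equivariant, so $F(\mathbf{P})$ satisfies the same minimality condition and must equal $\mathbf{P}$. Setting $P := \mathbf{P}^F$ produces a proper parabolic of $K$ with $R \subseteq R_u(\mathbf{P})^F = O_r(P)$ and $N_K(R) \subseteq N_{\mathbf{G}}(\mathbf{U})^F \subseteq \mathbf{P}^F = P$, as required. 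The main obstacle is the algebraic version, and within it the case in which $\mathbf{U}$ is centralised by no non-trivial torus: this requires a careful analysis of root subgroup positions relative to $\mathbf{B}$, and is the step where reductiveness and connectedness of $\mathbf{G}$ genuinely enter. The passage from the algebraic to the finite case is, by comparison, a routine Lang--Steinberg-style fixed-point computation.
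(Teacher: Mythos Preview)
The paper does not prove this theorem: it is quoted verbatim from Gorenstein--Lyons--Solomon and used as a black box in the proof of \cref{lie type}. There is therefore no ``paper's own proof'' to compare your proposal against.

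That said, your outline has real gaps in the algebraic core step. In the case where $C_{\mathbf{G}}(\mathbf{U})^0$ contains a nontrivial torus $\mathbf{T}_0$, you apply induction inside $\mathbf{L}=C_{\mathbf{G}}(\mathbf{T}_0)$ to obtain a parabolic $\mathbf{P}_{\mathbf{L}}<\mathbf{L}$ with $N_{\mathbf{L}}(\mathbf{U})\subseteq\mathbf{P}_{\mathbf{L}}$, and then assert that ``the parabolic $\mathbf{P}<\mathbf{G}$ determined by this data then does the job''. But $\mathbf{L}$ is a Levi of \emph{two} opposite parabolics of $\mathbf{G}$, and for neither of them is it clear that $N_{\mathbf{G}}(\mathbf{U})$ (as opposed to $N_{\mathbf{L}}(\mathbf{U})$) lands inside; elements of $N_{\mathbf{G}}(\mathbf{U})$ need not normalise $\mathbf{T}_0$ or $\mathbf{L}$ at all. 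The ``no torus'' case is even more problematic: the claimed Bruhat-decomposition argument that $N_{\mathbf{G}}(\mathbf{U})\subseteq\mathbf{B}$ is asserted without any mechanism, and in fact one expects only containment in some parabolic, not necessarily a Borel.

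The standard proof (Borel--Tits, reproduced in Gorenstein--Lyons--Solomon and in Humphreys) avoids both difficulties by a different device: starting from $\mathbf{U}$, one iterates the assignment $\mathbf{V}\mapsto \mathcal{U}\big(N_{\mathbf{G}}(\mathbf{V})\big)$, the set of unipotent elements in the normaliser, until it stabilises at some $\mathbf{U}_\infty$; one then shows that $N_{\mathbf{G}}(\mathbf{U}_\infty)$ is parabolic with unipotent radical $\mathbf{U}_\infty$. This construction is manifestly canonical, so $F$-stability of the resulting parabolic is automatic and no separate minimality argument is needed for the descent to $K$.
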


Recall that the normal $r$-core is the maximal normal $r$-subgroup.


\begin{thm}[Levi decomposition~{\cite[Theorem 2.6.5(e,f,g), Proposition 2.6.2(a,b)]{Gorensteinetal1998}}]
\label{levi factors}
 Let $P$ be a proper parabolic in a finite group $K$ of Lie type in characteristic $r$.
 \begin{enumerate}
  \item Let $U$ denote the normal $r$-core of $P$ (note that $U$ is nilpotent).  There exists a subgroup $L \leqslant P$, such that $L\cap U = \{1\}$ and $L U = P$.
  \item $L$ (the \emph{Levi factor}) contains a normal subgroup $M$ such that $L/M$ is abelian of order coprime to $r$.
  \item $M$ is isomorphic to a central  product of finite groups of Lie type (the \emph{simple factors of $L$}) in characteristic $r$ such that the sum of the ranks of these groups is lower than the rank of $K$.

 \end{enumerate}
 \end{thm}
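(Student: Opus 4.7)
The plan is to derive this statement from the structure theory of connected reductive algebraic groups, since $K$ arises as (a central quotient of) the fixed-point subgroup $\widetilde{G}^F$ of a connected reductive algebraic group $\widetilde{G}$ over $\overline{\F}_r$ under a Steinberg endomorphism $F$. Under this identification, proper parabolic subgroups $P\leqslant K$ correspond to $F$-stable proper parabolic subgroups $\widetilde{P}\leqslant \widetilde{G}$, and the normal $r$-core $U$ of $P$ corresponds to the $F$-fixed points of the unipotent radical $\widetilde{U}$ of $\widetilde{P}$.

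First, I would invoke the Levi decomposition at the algebraic group level: every parabolic $\widetilde{P}$ in $\widetilde{G}$ admits a decomposition $\widetilde{P} = \widetilde{L}\ltimes \widetilde{U}$ with $\widetilde{L}$ a connected reductive Levi subgroup containing a maximal torus (Borel--Humphreys). After conjugating, one may choose $\widetilde{L}$ to be $F$-stable. Taking $F$-fixed points and applying the Lang--Steinberg theorem to the connected groups $\widetilde{U}$ and $\widetilde{P}$ yields $P = L U$ with $L\cap U = \{1\}$, where $L := \widetilde{L}^F$. This establishes part (1).

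For part (2), use the fact that the connected reductive group $\widetilde{L}$ decomposes as an almost direct product $\widetilde{L} = Z^\circ(\widetilde{L})\cdot[\widetilde{L},\widetilde{L}]$, with the intersection finite and central, and with $[\widetilde{L},\widetilde{L}]$ semisimple. Define $M$ as (the image in $L$ of) $[\widetilde{L},\widetilde{L}]^F$. The quotient $L/M$ is then a homomorphic image of $Z^\circ(\widetilde{L})^F$, which is a finite abelian group whose order is coprime to $r$ (tori over finite fields have order a product of cyclotomic polynomial values in $q$, each coprime to $r$). For part (3), one uses that the semisimple group $[\widetilde{L},\widetilde{L}]$ is an almost direct product of $F$-orbits of simple factors, each such orbit contributing, via its $F$-fixed points, a finite group of Lie type; the resulting central product structure of $M$ follows. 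Since $P$ is a \emph{proper} parabolic, the corresponding subdiagram of the Dynkin diagram is proper, whence the sum of ranks of the simple factors drops strictly below the rank of $K$.

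The main obstacle in turning this sketch into a genuine proof is bookkeeping for twisted groups: one must verify that $F$ acts compatibly on the Dynkin diagram, and that taking fixed points of a central product of $F$-orbits of simple algebraic factors produces a central product of finite groups of Lie type with the advertised types (e.g.\ an orbit of two $\A_n$ factors swapped by $F$ yields a single $\A_n(q^2)$-type factor, an orbit fixed by a twist yields a $\Atwo_n$-type factor, etc.). Once the diagram combinatorics is set up correctly, the statement follows formally; for this reason it is more efficient in our paper to simply cite \cite{Gorensteinetal1998}.
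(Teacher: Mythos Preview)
Your sketch is a reasonable outline of the standard algebraic-groups proof, and you correctly identify the key ingredients (Lang--Steinberg, the almost-direct-product decomposition of a reductive Levi, and the diagram combinatorics for twisted types). However, note that the paper does not actually give its own proof of this statement: it is quoted verbatim as a structural result from \cite[Theorem 2.6.5(e,f,g), Proposition 2.6.2(a,b)]{Gorensteinetal1998}, with no accompanying argument. So there is nothing to compare your proposal against beyond the reference itself; your final remark that it is more efficient simply to cite Gorenstein--Lyons--Solomon is exactly what the paper does.
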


The following is the second structural result that we will need.

\begin{thm}[{\cite[Theorem 4.2.2]{Gorensteinetal1998}}]
\label{semisimple autos}
Let $K$ be an adjoint version of a finite group of Lie type defined in characteristic $r$.
Let $x \in K$ be an element of prime order $p$ with $p\neq r$, and let $C$ denote its centraliser in $K$. Then
\begin{enumerate}
 \item The group $C$ contains a normal subgroup $C^0$ (the \emph{connected centraliser}), such that $C/C^0$ is an elementary abelian $p$-group.
 \item The group $C^0$ contains an abelian subgroup $T$ (the \emph{torus}) and a normal subgroup $L$ such that $C^0 = LT$.
 \item The group $L$ is a central product of subgroups $L_1, \dots, L_s$ (the \emph{Lie factors}); each Lie factor $L_i$ is a finite group of Lie type in characteristic $r$.
 \item The sum of the ranks of the Lie factors is bounded above by the rank of $K$.

\end{enumerate}
\end{thm}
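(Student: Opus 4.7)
The plan is to realise $K$ as the finite group of $F$-fixed points of an adjoint simple algebraic group $\mathbf{G}$ over $\overline{\mathbb{F}}_r$, where $F \colon \mathbf{G} \to \mathbf{G}$ is a Steinberg endomorphism, and then to transfer the structural description of the centraliser of $x$ from the algebraic group to $K$ via the Lang--Steinberg theorem.

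First I would lift $x$ to an element of the same order in $\mathbf{G}$. Since $|x|$ is coprime to $r$, this $x$ is semisimple in the algebraic-group sense. By a theorem of Steinberg, the centraliser $\mathbf{C} := C_{\mathbf{G}}(x)$ is then a closed reductive subgroup of maximal rank; in particular, its identity component $\mathbf{C}^0$ contains any maximal torus through $x$. Fix such a torus $\mathbf{T}_0$. The derived subgroup $\mathbf{L} := [\mathbf{C}^0, \mathbf{C}^0]$ is generated by the root subgroups $\mathbf{U}_\alpha$ with $\alpha$ in the subsystem
\[
\Phi_x := \{\alpha \in \Phi(\mathbf{G}, \mathbf{T}_0) : \alpha(x) = 1\}
\]
of the root system of $\mathbf{G}$, and decomposes up to central isogeny as a product $\mathbf{L}_1 \cdots \mathbf{L}_s$ of simple algebraic subgroups indexed by the irreducible components of $\Phi_x$. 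Writing $\mathbf{T}$ for the connected centre of $\mathbf{C}^0$ (a subtorus of $\mathbf{T}_0$), we obtain $\mathbf{C}^0 = \mathbf{L} \cdot \mathbf{T}$ as an almost-direct product, and the bound on the total rank of the $\mathbf{L}_i$ by $\mathrm{rank}(\mathbf{G})$ is immediate from $\Phi_x \subseteq \Phi(\mathbf{G})$.

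Next I would analyse the component group $\mathbf{C}/\mathbf{C}^0$. For adjoint $\mathbf{G}$ this quotient may be non-trivial, in contrast to the simply connected setting where Steinberg's theorem forces connectedness; the point is to show that for $x$ of prime order $p$, the component group $\mathbf{C}/\mathbf{C}^0$ is elementary abelian of exponent $p$. One now sets $C^0 := (\mathbf{C}^0)^F$, $L := \mathbf{L}^F$, $T := \mathbf{T}^F$; applying Lang--Steinberg to the connected group $\mathbf{C}^0$ yields an embedding of $C/C^0$ into $(\mathbf{C}/\mathbf{C}^0)^F$, so $C/C^0$ inherits the elementary abelian $p$-structure, which is part (1). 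The decomposition $\mathbf{C}^0 = \mathbf{L}\mathbf{T}$ descends similarly by further use of Lang--Steinberg on $\mathbf{L}$ and $\mathbf{T}$, giving part (2). For part (3), $F$ permutes the factors $\mathbf{L}_i$ in orbits, and the $F$-fixed points of each orbit form a single finite group of Lie type in characteristic $r$; these are the Lie factors $L_i$. Part (4) is then immediate from the algebraic rank bound.

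The main obstacle will be establishing the elementary abelian $p$-structure of $\mathbf{C}/\mathbf{C}^0$: this requires showing that every coset of $\mathbf{C}^0$ in $\mathbf{C}$ admits a representative of order dividing $p$, which in turn reduces to a computation with the kernel of the isogeny from the simply connected cover of $\mathbf{G}$ and the action of $x$ on a maximal torus through it. The remaining assertions are a reformulation of the structure theory of reductive algebraic groups combined with routine Lang--Steinberg descent.
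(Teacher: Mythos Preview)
The paper does not prove this statement at all: it is quoted verbatim from \cite[Theorem 4.2.2]{Gorensteinetal1998} and used as a black box, so there is no ``paper's own proof'' to compare against. Your outline is the standard argument one finds in the literature (and indeed is essentially the strategy carried out in the cited reference): realise $K$ as $\mathbf{G}^F$ for an adjoint simple algebraic group $\mathbf{G}$ with Steinberg endomorphism $F$, invoke Steinberg's theorem to see that $C_{\mathbf{G}}(x)$ is reductive of maximal rank, read off the root-subsystem description of its identity component, and descend to $K$ via Lang--Steinberg. Your identification of the main difficulty---showing that the component group $\mathbf{C}/\mathbf{C}^0$ is elementary abelian of exponent $p$ when $x$ has prime order $p$, which amounts to a computation with the fundamental group of $\mathbf{G}$ (equivalently, the kernel of the simply connected cover)---is accurate, and the rest is indeed routine descent. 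There is nothing wrong with your sketch; it is simply that for the purposes of this paper the result is imported rather than proved.
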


\begin{rmk}
From the aforementioned results in \cite{Gorensteinetal1998}, and from \cite{DeriziotisMichler1987} and \cite{Shinoda75} we can additionally deduce that
\begin{enumerate}
\item the groups of type $\G_2(q)$ and $\Gtwo_2(q)$ are never simple factors of a Levi factor of a proper parabolic subgroup of any group of Lie type;
\item when $K$ is of type $\Dthree_4(q)$, then the simple factors of the Levi factor and the Lie factors are isomorphic to $\A_1$,  $\A_2$, or $\Atwo_2$; and
\item when $K$ is of type $\typeFtwo_4(q)$, then the simple factors of the Levi factor and  the Lie factors are isomorphic to $\A_1$, $\Btwo_2$, and $\Atwo_2$.
\end{enumerate}
\end{rmk}

\subsection{Alternating and classical groups}
In this subsection we prove that $\sp_{2g}(2)$ is the smallest quotient of $\mcg g b$ among the alternating and classical groups; \cref{main thm} will then follow for sufficiently large $g$. The proof is supported on two pillars: \cref{main thm reps} and the following result of Berrick--Gebhardt--Paris:

\begin{thm}[{\cite[Theorem 4]{Berricketal2014}}]
 \label{bgparis}
 Let $g \geqslant 3$ and $b\geqslant 0$. Up to conjugation, the group $\mcg{g}{b}$ contains a unique subgroup of index $2^{g-1}(2^g-1)$. Also, it does not contain any proper subgroups of smaller index.
\end{thm}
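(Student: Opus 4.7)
The plan is to split the proof into two parts: (i) the construction of a subgroup of index $2^{g-1}(2^g-1)$, and (ii) the lower bound together with the uniqueness of such a subgroup up to conjugation.

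Part (i) is constructive. The natural epimorphism $\pi \colon \mcg{g}{b} \to \sp_{2g}(2)$ described in the introduction allows one to pull back any subgroup of $\sp_{2g}(2)$. The group $\sp_{2g}(2)$ acts transitively on the set of $2^{g-1}(2^g-1)$ quadratic forms of minus type on $\F_2^{2g}$ that refine the standard symplectic form, with point stabiliser the maximal subgroup $\mathrm{O}^-_{2g}(2)$. Pulling this stabiliser back along $\pi$ yields a subgroup $H_0 \leqslant \mcg{g}{b}$ of index exactly $2^{g-1}(2^g-1)$, establishing the upper bound.

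For part (ii), let $H \leqslant \mcg{g}{b}$ be a proper subgroup of index $n$ and let $\phi \colon \mcg{g}{b} \to \Sym_n$ be the induced coset action, with normal core $N = \ker \phi$. The decisive step is to show that whenever $n \leqslant 2^{g-1}(2^g-1)$, the homomorphism $\phi$ factors through $\pi$; equivalently $\ker \pi \subseteq N$. Once this is in place, the statement reduces to the classical fact that the minimum faithful permutation degree of $\sp_{2g}(2)$ is $2^{g-1}(2^g-1)$, and that up to conjugacy the unique subgroup realising this index is $\mathrm{O}^-_{2g}(2)$; both facts can be extracted from the list of maximal subgroups of $\sp_{2g}(2)$, and the uniqueness in $\mcg{g}{b}$ follows immediately from uniqueness in $\sp_{2g}(2)$ by taking preimages under $\pi$.

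To establish the factorisation, I would exploit the structural rigidity of $\mcg{g}{b}$. Since the generating set $\mathcal S$ consists of Dehn twists along non-separating simple closed curves, all mutually conjugate, $\phi$ is essentially determined by the cycle type of $\phi(T_1)$, subject to the disjointness (commutation) and braid relations among elements of $\mathcal S$. A cycle-type case analysis, combined with an induction on $g$ via the subsurface inclusion $\mcg{g-1}{b+1} \hookrightarrow \mcg{g}{b}$, should rule out every permutation cycle structure incompatible with a factorisation through $\sp_{2g}(2)$. The main obstacle is carrying out this analysis uniformly in $g$: the constraints are most delicate for the smallest $g$, and in those cases one probably needs to supplement the argument by direct use of the chain or lantern relations to pin down the action of specific conjugates of $T_1$ on the coset set, together with the perfectness of $\mcg{g}{b}$ (\cref{perfect}) to exclude abelian images.
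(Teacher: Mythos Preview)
This theorem is not proved in the paper at all: it is quoted verbatim from Berrick--Gebhardt--Paris \cite[Theorem 4]{Berricketal2014}, and the paper explicitly says it has merely rephrased the result in a form suitable for its needs. So there is no ``paper's own proof'' to compare against; the authors simply invoke the result as a black box and move on.

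As for the substance of your outline: the broad architecture is indeed that of the Berrick--Gebhardt--Paris argument --- construct the subgroup by pulling back $\mathrm{O}^-_{2g}(2)$ along $\pi$, then show that any transitive action on at most $2^{g-1}(2^g-1)$ points factors through $\sp_{2g}(2)$, and finish with the known minimal permutation degree of $\sp_{2g}(2)$. But your part (ii) is not a proof, only a description of where the difficulty lies. The sentence ``A cycle-type case analysis \dots\ should rule out every permutation cycle structure incompatible with a factorisation through $\sp_{2g}(2)$'' is precisely the content of the Berrick--Gebhardt--Paris paper, and it occupies the bulk of their work; it is not something one can wave at. In particular, the induction you propose via $\mcg{g-1}{b+1} \hookrightarrow \mcg{g}{b}$ does not by itself control how the image of $\phi$ sits inside $\Sym_n$, and the base cases require genuinely ad hoc arguments. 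You have correctly identified the strategy, but what you have written is a plan, not a proof.
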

Note that we have rephrased the theorem in a way suitable to our needs -- in fact Berrick--Gebhardt--Paris prove more, since they also establish the index of the second smallest subgroup, and give lower bounds for the index of the third one.

\begin{cor}
\label{uniqueness}
Let $g \geqslant 3$ and $b \geqslant 0$.
Any epimorphism $\mcg{g}{b} \to \sp_{2g}(2)$ is obtained by postcomposing the natural map $\mcg{g}{b} \to \sp_{2g}(2)$ with an automorphism of $\sp_{2g}(2)$.
\end{cor}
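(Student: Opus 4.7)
My plan is to use \cref{bgparis} as the sole non-trivial ingredient. The rough idea: both the natural map $\pi \colon \mcg{g}{b} \to \sp_{2g}(2)$ and any given epimorphism $\phi \colon \mcg{g}{b} \to \sp_{2g}(2)$ give rise to transitive actions of $\mcg{g}{b}$ on the smallest faithful $\sp_{2g}(2)$-set; by \cref{bgparis} these two $\mcg{g}{b}$-actions must be equivalent, and translating this equivalence back to $\sp_{2g}(2)$ will produce the required automorphism.

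First I would fix a faithful transitive action of $\sp_{2g}(2)$ on a set $\Omega$ with $|\Omega| = 2^{g-1}(2^g-1)$; one may take the action on vectors of minus type with respect to a quadratic form refining the symplectic form, or equivalently on the cosets of $\mathrm{O}^-_{2g}(2)$. Write $\iota \colon \sp_{2g}(2) \hookrightarrow \Sym(\Omega)$ for the resulting embedding. Composing $\iota$ with $\pi$ and with $\phi$ yields two transitive actions of $\mcg{g}{b}$ on $\Omega$; since both $\pi$ and $\phi$ are surjective and the point stabilisers in $\sp_{2g}(2)$ are proper, the corresponding point stabilisers in $\mcg{g}{b}$ are proper subgroups of index exactly $2^{g-1}(2^g-1)$.

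\cref{bgparis} now tells us that these two stabilisers are conjugate in $\mcg{g}{b}$, so the two transitive actions are equivalent as $\mcg{g}{b}$-sets. This gives a permutation $\sigma \in \Sym(\Omega)$ with $\iota(\phi(x)) = \sigma \, \iota(\pi(x)) \, \sigma^{-1}$ for every $x \in \mcg{g}{b}$. Since both $\iota \circ \pi$ and $\iota \circ \phi$ have image $\iota(\sp_{2g}(2))$, the element $\sigma$ normalises $\iota(\sp_{2g}(2))$, and so conjugation by $\sigma$ transports via $\iota$ to a well-defined automorphism $\alpha \in \Aut(\sp_{2g}(2))$. Injectivity of $\iota$ then forces $\phi = \alpha \circ \pi$, which is exactly the conclusion.

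The only substantive input is \cref{bgparis}, which supplies the uniqueness up to conjugacy of minimal index subgroups of $\mcg{g}{b}$; once that is used to pass from conjugacy of stabilisers to equivalence of $G$-sets, the rest is routine bookkeeping with the normaliser of $\iota(\sp_{2g}(2))$ in $\Sym(\Omega)$, and I do not anticipate any hidden difficulty.
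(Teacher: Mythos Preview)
Your argument is correct and rests on the same single nontrivial input as the paper's, namely \cref{bgparis}. The execution of the final step differs slightly: the paper pulls back the index-$2^{g-1}(2^g-1)$ subgroup $M \leqslant \sp_{2g}(2)$ along both $\pi$ and $\phi$, uses conjugacy of the preimages to deduce that $\ker \pi \leqslant \phi^{-1}(M)$, and then invokes simplicity of $\sp_{2g}(2)$ to force $\phi(\ker \pi)=1$, so that $\phi$ factors through $\pi$. Your route instead stays inside $\Sym(\Omega)$ and reads off the automorphism directly from the normaliser of $\iota(\sp_{2g}(2))$, which has the mild advantage of not explicitly appealing to simplicity. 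Either way the substance is the same.
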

Let us remark here that for $g \geqslant 3$, the group $\sp_{2g}(2)$ has trivial outer automorphism group \cite[Section 3.5.5]{raw}.
\begin{proof}
Let us start with the natural epimorphism $\nu \colon \mcg{g}{b} \to \sp_{2g}(2)$ obtained by first gluing discs to the boundary components (`capping'), then abelianising $\pi_1(\Sigma_{g})$, and then reducing the entries of matrices modulo $2$. Since $\sp_{2g}(2)$ has a (maximal) subgroup $M$ of index $2^{g-1}(2^g-1)$ (see \cite[Table 5.2.A]{KleidmanLiebeck1990}), we conclude that $\nu^{-1}(M)$ is the unique (up to conjugation) subgroup of $\mcg{g}{b}$ of index $2^{g-1}(2^g-1)$. Crucially, $\nu^{-1} (M)$ contains
the kernel $\ker \nu$.

Now let $\phi \colon \mcg{g}{b} \to \sp_{2g}(2)$ be any epimorphism. By uniqueness, we see that $\phi^{-1}(M)$ is conjugate to $\nu^{-1}(M)$, and so, in particular, contains the normal subgroup $\ker \nu$ of $\mcg{g}{b}$. Therefore $\phi(\ker \nu)$ is a proper normal subgroup of $\sp_{2g}(2)$. But the latter group is simple, and therefore $\phi(\ker \nu) = \{1\}$. We conclude that $\phi$ factors through $\nu$. Since the images of $\nu$ and $\phi$ have equal cardinality, we immediately conclude that $\phi$ is equal to $\nu$ followed by an automorphism of $\sp_{2g}(2)$.
\end{proof}

\begin{prop}
\label{large g}
 For $g \geqslant 3$, the smallest non-trivial quotient of $\mcg{g}{b}$ among alternating groups and classical groups of Lie type is $\sp_{2g}(2)$.
\end{prop}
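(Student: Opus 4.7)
The plan is to handle alternating and classical Lie-type quotients separately, combining \cref{main thm reps} (for lower bounds on faithful-projective-module dimensions) with \cref{bgparis} (for lower bounds on indices of proper subgroups).

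First, if $K = \Alt_n$ is an alternating quotient, pulling back the natural $n$-point action along $\mcg{g}{b} \twoheadrightarrow \Alt_n$ gives a transitive action of $\mcg{g}{b}$ on $n$ points, hence a subgroup of index $n$; by \cref{bgparis}, $n \geq 2^{g-1}(2^g-1) \geq 28$. A Stirling-type estimate (or the crude bound $|\sp_{2g}(2)| \leq 2^{2g^2 + g}$) then gives $|\Alt_n| = n!/2 > |\sp_{2g}(2)|$ for all admissible $n$ and all $g \geq 3$.

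Second, for a classical Lie-type quotient $K$ over $\F_q$ in defining characteristic $r$, post-composing $\mcg{g}{b} \twoheadrightarrow K$ with the faithful embedding $K \hookrightarrow \PGL_d(\Fbar_r)$ (with $d$ taken from \cref{dynkinclassical}) yields a projective representation of $\mcg{g}{b}$ of dimension $d$, so \cref{main thm reps} forces $d \geq 2g$. This restricts $K$ to types $\A_n(q)$ or $\Atwo_n(q)$ with $n \geq 2g-1$, or $\B_n(q), \typeC_n(q), \D_n(q), \Dtwo_n(q)$ with $n \geq g$ (respecting $n \geq 4$ for $\D, \Dtwo$). The equality case $|\typeC_g(2)| = |\B_g(2)| = |\sp_{2g}(2)|$ gives $K \cong \sp_{2g}(2)$. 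Routine comparisons of the order formulae handle every remaining combination except possibly $K \in \{\D_g(2), \Dtwo_g(2)\}$ for $g \geq 4$.

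The main obstacle is excluding these last two families as quotients: both have orders strictly smaller than $|\sp_{2g}(2)|$, and their natural modules have dimension exactly $2g$, so \cref{main thm reps} just barely fails to apply. For $\Dtwo_g(2)$, I would note that the stabiliser of a singular $1$-space has index $(2^g+1)(2^{g-1}-1) = 2^{g-1}(2^g-1)-1$, contradicting \cref{bgparis} immediately. For $\D_g(2)$ with $g=4$, the triality-exceptional subgroup $\sp_6(2) \leq \D_4(2)$ has index $120 = 2^{g-1}(2^g-1)$, so the uniqueness clause of \cref{bgparis} identifies its pullback with (a conjugate of) $\nu^{-1}(M)$; adapting the argument of \cref{uniqueness}, the surjection must then factor through the natural $\nu \colon \mcg{4}{b} \twoheadrightarrow \sp_8(2)$, which is impossible by simplicity and $|\sp_8(2)| > |\D_4(2)|$. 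For $\D_g(2)$ with $g \geq 5$, I would appeal to the refined form of \cite[Theorem 4]{Berricketal2014} mentioned immediately after \cref{bgparis}: the parabolic stabiliser of a singular $1$-space in $\D_g(2)$ has index $(2^g-1)(2^{g-1}+1) = 2^{g-1}(2^g+1)-1$, lying strictly between the two BGP-unique indices $2^{g-1}(2^g \pm 1)$ and below the next admissible bound $5 \cdot 2^{g-2}(2^{g-1}-1)$, so no such subgroup of $\mcg{g}{b}$ exists.
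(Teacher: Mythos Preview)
Your proposal is correct and follows the same overall strategy as the paper: Stirling-type bounds via \cref{bgparis} for the alternating case, and \cref{main thm reps} together with order comparisons to reduce the classical case to $\D_g(2)$, $\Dtwo_g(2)$, and $\typeC_g(2)\cong\sp_{2g}(2)$, with $\Dtwo_g(2)$ eliminated by its index-$\big(2^{g-1}(2^g-1)-1\big)$ subgroup.

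The only divergence is in eliminating $\D_g(2)$. You split into $g=4$ (using the triality subgroup $\sp_6(2)$ of index $120$ and the uniqueness clause) and $g\geqslant 5$ (using the singular-point parabolic of index $2^{g-1}(2^g+1)-1$ and the \emph{refined} second-index statement of Berrick--Gebhardt--Paris). The paper instead notes that for \emph{every} $g\geqslant 4$ the group $\D_g(2)=\mathrm{O}^+_{2g}(2)$ acts transitively on its $2^{g-1}(2^g-1)$ nonsingular points, so always has a subgroup of index exactly $2^{g-1}(2^g-1)$; the uniqueness clause of \cref{bgparis} and the argument of \cref{uniqueness} then apply uniformly, with no case split and no need for the sharper second-index bound. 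Your argument works, but the paper's is cleaner here.
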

\begin{proof}
Let $K$ denote the smallest non-trivial quotient of $\mcg{g}{b}$ among the alternating groups and the classical groups of Lie type.  
\cref{bgparis} tells us that $\mcg{g}{b}$ cannot act on fewer than $2^{g-1}(2^g-1)$ points, and thus if $K$ is an alternating group then its rank is bounded below by $2^{g-1}(2^g-1)$. But using Stirling's approximation we see that
\begin{align*}
 \vert \Alt_{2^{g-1}(2^g-1)} \vert &= \frac 1 2 \big(2^{g-1}(2^g-1)\big)! \\
 &\geqslant \frac 1 2 (2 \pi)^{\frac 1 2} \big(2^{g-1}(2^g-1)\big)^{2^{g-1}(2^g-1) + \frac 1 2} e^{-2^{g-1}(2^g-1)} \\
 &\geqslant \big( {2^{g-1}(2^g-1)} e^{-1} \big)^{2^{g-1}(2^g-1) }  \\
 &> \big( {2^{g-3}(2^g-1)}  \big)^{2^{g-1}(2^g-1) }  \\
 &> \big( {2^{g-3}(2^g-1)}  \big)^{9g }  \\
 &>  {2^{9g^2-27}\prod_{i=1}^g (2^g-1)^9}    \\
 &>  {2^{g^2}\prod_{i=1}^g (2^{2i}-1)}    \\
 &= \vert \sp_{2g}(2) \vert
\end{align*}
for $g \geqslant 3$.

Now let us assume that $K$ is an adjoint version of a classical group of Lie type.
Combining the assumption that $\vert K \vert \leqslant \vert \sp_{2g}(2) \vert$ with \cref{main thm reps}, we conclude that $K$ is isomorphic to $\D_g(2)$, $\Dtwo_g(2)$, or $\B_g(2) \cong \typeC_g(2) \cong \sp_{2g}(2)$.
But $\Dtwo_g(2)$ has a subgroup of index $2^{g-1}(2^g-1)-1$ (see \cite[Table 5.2.A]{KleidmanLiebeck1990}), and hence cannot be a quotient of $\mcg{g}{b}$ by \cref{bgparis}; similarly, $\D_g(2)$ has a subgroup of index $2^{g-1}(2^g-1)$ and thus it is ruled out by the uniqueness part of \cref{bgparis}, since $\D_g(2)$ is simple and not isomorphic to $\sp_{2g}(2)$, and we may argue exactly as  in \cref{uniqueness}.
\end{proof}

\begin{rmk}
At this point we can already say that
for sufficiently large $g$, the smallest non-trivial quotient of $\mcg{g}{b}$ is $\sp_{2g}(2)$, and the quotient map is obtained by postcomposing the natural map $\mcg{g}{b} \to \sp_{2g}(2)$ with an automorphism of $\sp_{2g}(2)$.
For large enough $g$ the sporadics are excluded by any one of \cref{main thm reps,bgparis}, and the exceptional groups of Lie type are excluded by \cref{main thm reps}, since the smallest dimension of a non-trivial projective representation of such a group is bounded above by $248$. We also use \cref{uniqueness}.
\end{rmk}

\subsection{Exceptional groups}

To deal with exceptional groups we will develop a technique that applies to all finite groups of Lie type, excluding $\G_2(q)$ and $\Gtwo_2(q)$. For classical groups it does not however surpass \cref{main thm reps} in applicability.

\begin{dfn}
 Let $K$ be a finite group. Given an integer $m\geqslant 2$, The \emph{$m$-rank} of $K$ is defined to be the largest integer $n$ such that $m^n$, the $n$-fold direct product of the cyclic group of order $m$, embeds into $K$.
\end{dfn}

\begin{lem}
\label{p-rank}
Let $K$ be the adjoint version of 
a group of Lie type over the field of size $q$.
Let $p$ be an odd prime coprime to $q$.
Then the $p$-rank of $K$ is bounded above by the rank of $K$.
\end{lem}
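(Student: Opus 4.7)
The plan is to induct on the rank of $K$. The base cases $\text{rk}(K) \leq 1$ are straightforward: if $\text{rk}(K) = 0$ then $K$ is trivial, and if $\text{rk}(K) = 1$ then $K \cong \A_1(q)$, whose Sylow $p$-subgroups for $p$ coprime to $q$ are cyclic, so the $p$-rank is at most $1$.

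For the inductive step, let $A \leq K$ be an elementary abelian $p$-subgroup, and pick any non-trivial $x \in A$. Since $p$ is coprime to the defining characteristic $r$, the Jordan decomposition forces $x$ to be semisimple, and I would apply \cref{semisimple autos} to $x$ to obtain the centraliser decomposition: a normal subgroup $C^0$ of $C := C_K(x)$ with $C/C^0$ elementary abelian of exponent $p$, and $C^0 = LT$ where $L = L_1 \ast \cdots \ast L_s$ is a central product of Lie factors, $T$ is abelian, and $\sum_i \text{rk}(L_i) \leq \text{rk}(K)$. Since $A$ is abelian and contains $x$, we have $A \leq C$, so it suffices to bound the $p$-rank of $C$.

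The heart of the argument is bounding the $p$-rank of $A \cap C^0$ via the normal filtration $\{1\} \leq L \leq C^0$. On one hand, $A \cap L$ lies inside the central product of Lie factors, and combining the inductive hypothesis (which bounds the $p$-rank of each $L_i$ by $\text{rk}(L_i)$) with a standard bound on the $p$-rank of a central product gives that the $p$-rank of $A \cap L$ is at most $\sum \text{rk}(L_i)$. On the other hand, $(A \cap C^0)/(A \cap L)$ embeds in the abelian group $C^0/L$, which is a quotient of the torus $T$; since the $p$-torsion of an algebraic torus of dimension $d$ over a field of characteristic coprime to $p$ has $\F_p$-rank exactly $d$, the $p$-rank of $C^0/L$ is at most $\text{rk}(K) - \sum \text{rk}(L_i)$. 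Summing, the $p$-rank of $A \cap C^0$ is at most $\text{rk}(K)$.

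The main obstacle is handling the remaining quotient $A/(A \cap C^0)$, which injects into the component group $C/C^0$. Since \cref{semisimple autos} only asserts $C/C^0$ is elementary abelian of exponent $p$ without bounding its rank, the naive argument yields only that the $p$-rank of $A$ is at most $\text{rk}(K)$ plus the $p$-rank of $C/C^0$, which is too weak. The resolution is to choose $x \in A$ more carefully so that the component group vanishes or is otherwise absorbed: one option is to pass to the simply connected cover $K_{\mathrm{sc}} \to K$, where Steinberg's theorem guarantees that semisimple centralisers are connected, and then descend to the adjoint quotient using the hypothesis that $p$ is odd and coprime to $|Z(K_{\mathrm{sc}})|$. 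A secondary issue is that the induction on rank may fail in the degenerate case where a single Lie factor $L_i$ has $\text{rk}(L_i) = \text{rk}(K)$; this is handled by strengthening the induction to one on cardinality, using that $C < K$ whenever $x \neq 1$, as the adjoint group $K$ has trivial centre. Closing these gaps is the delicate part of the argument, where the hypotheses that $K$ is adjoint and $p$ is odd become essential.
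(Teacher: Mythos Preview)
Your inductive approach via centraliser structure is quite different from the paper's proof, which is a short direct argument. The paper cites \cite[Theorem~4.10.3b]{Gorensteinetal1998}, which bounds the $p$-rank of $K$ by the multiplicity $n_{m_0}$ of the cyclotomic polynomial $\Phi_{m_0}$ (where $m_0$ is the multiplicative order of $q$ modulo $p$) in the order of the inner-diagonal automorphism group $K_0$, viewed as a polynomial in $q$; it then reads off from the standard order formulae that $|K_0| = q^N \prod_{i=1}^{k}(q^{n_i} - \omega_i)$ with $k$ equal to the rank of $K$ and each $\omega_i$ a root of unity, and notes that no factor $q^{n_i} - \omega_i$ is divisible by $\Phi^2$ for any cyclotomic $\Phi$. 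Hence $n_{m_0} \leqslant k$, and the lemma follows in one stroke.

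Your plan, by contrast, has a genuine gap at the component-group step. You propose to pass to the simply connected cover $K_{\mathrm{sc}}$ and invoke Steinberg's connectedness theorem, then descend ``using the hypothesis that $p$ is odd and coprime to $|Z(K_{\mathrm{sc}})|$''. But that is \emph{not} among the hypotheses: the lemma assumes only that $p$ is odd and coprime to $q$. For $K$ of type $\A_{n-1}(q)$ one has $|Z(K_{\mathrm{sc}})| = \gcd(n, q-1)$, which can be divisible by any odd prime dividing both $n$ and $q-1$, and such a prime is certainly coprime to $q$. In precisely this situation the preimage in $K_{\mathrm{sc}}$ of an elementary abelian $p$-subgroup $A \leqslant K$ need not split over $Z(K_{\mathrm{sc}})$, so the problem does not transport to $K_{\mathrm{sc}}$ for free, and back in the adjoint group the component group $C/C^0$ can contribute $p$-rank that your filtration $\{1\} \leqslant L \leqslant C^0 \leqslant C$ does not absorb. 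There is also a secondary issue you have not addressed: the Lie factors $L_i$ supplied by \cref{semisimple autos} are not asserted to be adjoint versions, so your inductive hypothesis, stated only for adjoint $K$, does not apply to them as written; you would need either to strengthen the induction to all versions or to argue separately that passing to the adjoint quotient of each $L_i$ cannot raise the $p$-rank.
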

\begin{proof}
Using \cite[Theorem 4.10.3b]{Gorensteinetal1998} we see that the $p$-rank of $K$ is bounded above by the number $n_{m_0}$, defined to be the multiplicity of the cyclotomic polynomial associated to $m_0$ in the order of $K_0$ thought of as a polynomial in $q$. Here $K_0$ stands for the group of inner diagonal automorphisms of $K$, and $m_0$ is the multiplicative order of $q$ modulo $p$.

Note that $K_0$ has the same order as the universal  version of $K$, and these orders are given in \cite[Table 2.2]{Gorensteinetal1998}.
They are always of the form
\[
 q^N \prod_{i=1}^k (q^{n_i} -\omega_i)
\]
where $N$ is a natural number, $k$ is the rank of $K$, and $\omega_i$ is a root of unity (of order at most $3$). (Note that, following the convention of \cite{Gorensteinetal1998}, for Suzuki--Ree groups i.e. types $\Btwo_2, \Gtwo_2$, and $\typeFtwo_4$, we take the square root of $q$ in place of $q$ in the above formula.)
Given a cyclotomic polynomial $\Phi$, none of the polynomials of the form $q^{n_i} -\omega_i$ is divisible by $\Phi^2$. Thus the multiplicity $n_{m_0}$ is bounded above by $k$.
\end{proof}

\begin{prop}
\label{small p-rank}
Let $g \geqslant 2$ and $b \geqslant 0$.
Let $\phi \colon \mcg{g}{b} \to K$ be a homomorphism to a finite group $K$ with $\phi(T_1) \neq 1$.
\begin{enumerate}
 \item Let $m\geqslant 2$ divide the order of $\phi(T_1)$. If the $m$-rank of $K$ is less than $g$ then some $\phi(T_1)^n$ with $\phi(T_1)^n \neq 1$ is central in $\im \phi$.
 \item If $\phi(T_1)^2=1$ and the $3$-rank of $K$ is less than $g$, then $\phi(T_1)$ is central in $\im \phi$.
\end{enumerate}
\end{prop}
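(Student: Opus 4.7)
The plan is to exploit the $g$ pairwise commuting, pairwise conjugate Dehn twists $T_1,\ldots,T_g$ from \cref{surface}. Because the curves $c_1,\ldots,c_g$ form a disjoint system of non-separating simple closed curves, the change-of-coordinates principle produces elements of $\mcg{g}{b}$ realising every permutation of $\{T_1,\ldots,T_g\}$ by conjugation, and one may arrange simultaneously for $\{T_1',\ldots,T_g'\}$ to be permuted the same way. Pushing through $\phi$, this yields a $\Sym_g$-action on $\{\phi(T_1),\ldots,\phi(T_g)\}$ (and on $\{\phi(T_iT_i')\}_i$) by inner automorphisms of $\im\phi$. Inspection of \cref{surface} also shows that every element of $\mathcal S$ other than $T_1'$ is supported on a curve disjoint from $c_1$, and so commutes with $T_1$; dually $T_1'$ is disjoint from $c_j$ and $c_j'$ for every $j\geqslant 2$.

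For \textbf{(1)}, set $d:=\vert\phi(T_1)\vert$ and $x_i:=\phi(T_i)^{d/m}$, so the $x_i$ commute pairwise and have order $m$. If they were independent the subgroup they span would equal $(\Z/m)^g$, contradicting the $m$-rank hypothesis; hence the module of relations $R\leqslant(\Z/m)^g$ is non-zero and $\Sym_g$-invariant. Pick a non-zero $(a_1,\ldots,a_g)\in R$. If not all $a_i$ coincide modulo $m$, subtracting the $(1\,2)$-permute gives $(a_1-a_2)(x_1-x_2)=0$, producing $x_1^e=x_2^e$ with $e:=\gcd(a_1-a_2,m)<m$; if all $a_i$ do coincide modulo $m$, the relation reads $(x_1\cdots x_g)^{a_1}=1$ and produces $x_1^e=(x_2\cdots x_g)^{-e}$ with $e:=\gcd(a_1,m)<m$. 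In either case $x_1^e$ is a product of powers of $\phi(T_j)$ with $j\geqslant 2$, each of which commutes with $\phi(T_1')$; and as $x_1^e$ is itself a power of $\phi(T_1)$ it commutes with every other generator in $\mathcal S$. So $x_1^e$ is central in $\im\phi$, and is non-trivial because $x_1$ has order $m$ while $0<e<m$.

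For \textbf{(2)}, all non-separating twists have image of order $2$ (being conjugate to $\phi(T_1)$), so the braid relation $\phi(T_i)\phi(T_i')\phi(T_i)=\phi(T_i')\phi(T_i)\phi(T_i')$ forces $(\phi(T_i)\phi(T_i'))^3=1$; write $z_i:=\phi(T_iT_i')$. The $z_i$ pairwise commute (the curves $c_i,c_i',c_j,c_j'$ are mutually disjoint for $i\neq j$) and are $\Sym_g$-conjugate as above. If some $z_i=1$ then $\phi(T_1)=\phi(T_1')^{-1}=\phi(T_1')$ already. Otherwise each $z_i$ has order exactly $3$, and the sifting of part (1) applied with $m=3$ yields either $z_1=z_j$ for some $j\geqslant 2$, or $z_1z_2\cdots z_g=1$. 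In both cases $z_1$ is expressed through $z_j$ with $j\geqslant 2$, each of which commutes with $\phi(T_1)$ since $T_j$ and $T_j'$ do. Writing $z_1=\phi(T_1)\phi(T_1')$ and using $\phi(T_1)^2=1$, the commutation $[\phi(T_1),z_1]=1$ rearranges to $\phi(T_1')=\phi(T_1)\phi(T_1')\phi(T_1)$; combining with the braid relation forces $\phi(T_1)=\phi(T_1')$. Exactly as in (1), $\phi(T_1)$ then centralises every element of $\mathcal S$.

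The main obstacle is the $\Sym_g$-sifting step: reducing an arbitrary non-zero relation to one of the two normal forms $(e,-e,0,\ldots,0)$ or $(e,e,\ldots,e)$ amounts to tracking $\gcd$s modulo $m$, easy but requiring care when $m$ is composite. All remaining steps are routine commutator calculations or direct geometric inspection of \cref{surface}.
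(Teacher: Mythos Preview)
Your argument is correct and follows the same strategy as the paper: exploit the $g$ pairwise commuting conjugate twists $T_1,\dots,T_g$ (respectively $S_i=T_iT_i'$ in part (2)), use the rank hypothesis to force a non-trivial relation among their images, and observe that the relation lets one express a non-trivial power of $\phi(T_1)$ as a word in the $\phi(T_j)$ with $j\neq 1$, which already commute with $\phi(T_1')$.

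The one substantive difference is your $\Sym_g$-sifting step, which you flag as ``the main obstacle''. It is in fact unnecessary. Given any non-trivial relation $\prod_i x_i^{a_i}=1$ with the $x_i=\phi(T_i)^{d/m}$ of exact order $m$, simply choose $j$ with $a_j\not\equiv 0\pmod m$; then $x_j^{a_j}\neq 1$ and $x_j^{a_j}=\prod_{i\neq j} x_i^{-a_i}$ directly. This single observation replaces both branches of your sifting and the gcd bookkeeping, and one then transports the conclusion from index $j$ to index $1$ by the conjugation you already set up. The paper proceeds exactly this way. Your normal-form reduction is valid but buys nothing extra here, and the same simplification applies in part (2) since $3$ is prime.
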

\begin{proof}
\noindent \textbf{(1)}
Let  the order of $\phi(T_1)$ be $km$.
Consider
\[
 Z = \langle \{ \phi(T_1)^k, \dots, \phi(T_g)^k \} \rangle
\]
This is the image in $K$ of a group isomorphic to $m^g$, and so, since the $m$-rank of $K$ is less than $g$, for some $j$ we have
\[
 \phi(T_j)^{n} = \prod_{i\neq j} \phi(T_i)^{n_i}
\]
with $n, n_i \in \Z$ and $\phi(T_j)^{n} \neq 1$.

Now consider $\phi(T_j')$. Since $T_j'$ commutes with every $T_i$ with $i\neq j$, we immediately see that $\phi(T_j')$ commutes with $\phi(T_j)^n$. But then $\phi(T_j)^n$ commutes with the image under $\phi$ of every generator of $\mcg{g}{b}$ from $\mathcal S$, and so $\im \phi$ lies in the centraliser of $\phi(T_j)^n$. Using conjugation, we obtain the same result for $\phi(T_1)^n$, as required.

\smallskip
\noindent \textbf{(2)}
Suppose that $\phi(T_1)$ has order $2$. Then using the braid relation
 \[T_1 T_1' T_1 = T_1' T_1 T_1'\]
 and noting that $\phi(T_1')$ also has order $2$ as $T_1$ and $T_1'$ are conjugate, we conclude that $\phi(S_1)$ is either trivial, or has order $3$, where $S_1 = T_1 T_1'$.

 If $\phi(S_1) = 1$ then $\phi(T_1)$ commutes with $\phi(T_1')$. Since $T_1$ commutes with all the other generators in $\mathcal S$, it is immediate that $\phi(T_1)$ is central.

 If $\phi(S_1)$ has order $3$, then consider
\[
 W = \langle \{ \phi(S_1), \dots, \phi(S_g) \} \rangle
\]
where $S_i = T_i T_i'$ has order $3$ for each $i$. Using the assumption on the $3$-rank of $K$, and the fact that $3$ is prime, without loss of generality we have
\[
 \phi(S_1)= \prod_{i>1} \phi(S_i)^{n_i}
\]
with $n_i \in \{0,1,2\}$. Similarly as before, we conclude that $\phi(T_1)$ commutes with $\phi(S_1)$. But $S_1 = T_1 T_1'$, and $T_1$ commutes with itself, and so $\phi(T_1)$ commutes with $\phi(T_1')$ and we are done as before.
\end{proof}

\begin{cor}
\label{ranks cor}
Let $g \geqslant 2$ and $b \geqslant 0$.
 Let $K$ be a finite non-trivial group with trivial centre and  with $m$-rank less than $g$ for every $m \geqslant 3$. Then $K$ is not a quotient of $\mcg{g}{b}$.
\end{cor}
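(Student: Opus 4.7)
The plan is to argue by contradiction: suppose $\phi \colon \mcg{g}{b} \to K$ is a surjection onto a non-trivial group $K$, and derive a contradiction with the assumption that $Z(K) = \{1\}$.

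The first observation is that since $\mathcal{S}$ generates $\mcg{g}{b}$ and all its elements are pairwise conjugate (every Dehn twist in $\mathcal{S}$ is a Dehn twist along a non-separating simple closed curve, and any two such curves are related by a homeomorphism of $\Sigma_{g,b}$), if $\phi(T_1) = 1$ then $\phi$ kills the whole generating set and $K$ is trivial, a contradiction. So $\phi(T_1)$ has some order $\ell \geqslant 2$, and I can split into two cases depending on whether $\ell = 2$ or $\ell \geqslant 3$.

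If $\ell \geqslant 3$, then $\ell$ itself is an integer $\geqslant 3$ dividing the order of $\phi(T_1)$, so by hypothesis the $\ell$-rank of $K$ is less than $g$, and \cref{small p-rank}(1) applied with $m = \ell$ supplies an integer $n$ such that $\phi(T_1)^n \neq 1$ but $\phi(T_1)^n$ lies in the centre of $\im \phi = K$. This contradicts $Z(K) = \{1\}$. If instead $\ell = 2$, then the hypothesis gives that the $3$-rank of $K$ is less than $g$, so \cref{small p-rank}(2) forces $\phi(T_1)$ itself to be central in $K$, again contradicting $Z(K) = \{1\}$.

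There is no real obstacle here: \cref{small p-rank} does essentially all the work, and the only thing to check is that the two parts of that proposition together cover every possible order $\ell \geqslant 2$ of $\phi(T_1)$ under the given rank hypothesis. Part (2) handles $\ell = 2$ using the $3$-rank bound, and part (1) handles every $\ell \geqslant 3$ by taking $m = \ell$ (for which the $m$-rank bound is exactly the hypothesis). Since these exhaust all nontrivial possibilities, the corollary follows.
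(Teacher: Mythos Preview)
Your proof is correct and follows essentially the same approach as the paper. Both arguments use \cref{small p-rank}(1) to force the order of $\phi(T_1)$ to be $2$ (you phrase this as a case split on $\ell$, the paper phrases it contrapositively), and then invoke \cref{small p-rank}(2) to reach the contradiction with $Z(K)=\{1\}$.
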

\begin{proof}
Suppose that $\phi \colon \mcg{g}{b} \to K$ is a quotient map. Since $K$ is non-trivial and has trivial centre, we have $\phi(T_1) \neq 1$ by \cref{killing dehn twists}, and we also see that $\phi(T_1)$ cannot be central in $\im \phi$. Therefore, by \cref{small p-rank}(1), the order of $\phi(T_1)$ is $2$. But this contradicts \cref{small p-rank}(2).
\end{proof}

\begin{thm}
\label{lie type}
Let $b \geqslant 0$.
Let $g \geqslant 4$ and $K$
be a version of a group of Lie type of rank less than $g$, or let $g=3$ and $K$ be a version of a group of Lie type of rank less than $g$ with the exception of $\G_2(q)$ and $\Gtwo_2(q)$.
Then every homomorphism 
 $\mcg{g}{b} \to K$ is trivial.
\end{thm}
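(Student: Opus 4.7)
The plan is to argue by contradiction via strong induction on $\vert K \vert$ (uniform in $g$), using the Jordan decomposition of $\phi(T_1)$ to split into a unipotent and a purely semisimple case. Since $\mcg{g}{b}$ is perfect by \cref{perfect}, every homomorphism from it lands in the derived subgroup, and composition with the central quotient $K \to K^{\mathrm{ad}}$ is injective on the perfect image; so we may assume $K$ is adjoint. If $\phi \neq 1$ then $\phi(T_1) \neq 1$ by \cref{killing dehn twists}. Write $\phi(T_1) = x_s x_u$ with $x_s$ of order $N_s$ coprime to $r$ and $x_u$ of $r$-power order $N_u$, and fix a subsurface $\Gamma \cong \Sigma_{g-1,b+1}$ disjoint from the support of $T_1$, so that $\phi(\mcgop(\Gamma)) \leqslant C_K(\phi(T_1))$.

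\textbf{Case $x_u \neq 1$.} The element $\phi(T_1^{N_s}) = x_u^{N_s}$ is a non-trivial unipotent whose $\mcg{g}{b}$-centraliser agrees with that of $T_1$, so $\phi(\mcgop(\Gamma)) \leqslant C_K(x_u^{N_s})$. By \cref{borel--tits} and \cref{levi factors}, this centraliser lies in a proper parabolic $P = LU$ with $L/M$ abelian and $M$ a central product of Lie-type simple factors $M_i$ of total rank strictly less than $\mathrm{rank}(K) \leqslant g-1$, hence each $\mathrm{rank}(M_i) \leqslant g-2$; moreover by remark~(1) after \cref{semisimple autos} no $M_i$ is of type $\G_2$ or $\Gtwo_2$. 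I then project successively: $L/M$ is killed by perfectness of $\mcg{g-1}{b+1}$ (for $g=3$ one runs the argument on its perfect derived subgroup and closes via \cref{killing dehn twists}); each $M_i^{\mathrm{ad}}$, being adjoint Lie-type with $\vert M_i^{\mathrm{ad}} \vert < \vert K \vert$ and rank strictly below $g-1$, is killed by induction; and the residual image in the nilpotent $U$ vanishes by perfectness again. Hence $\phi(\mcgop(\Gamma)) = 1$, so $\phi(T_2) = 1 = \phi(T_1)$, contradicting $\phi(T_1) \neq 1$.

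\textbf{Case $x_u = 1$.} Pick a prime $p \neq r$ dividing $N_s$. By \cref{p-rank} the $p$-rank of $K$ is at most $\mathrm{rank}(K) < g$, so \cref{small p-rank}(1) yields a non-trivial power $y$ of $\phi(T_1)$ lying in $Z(\im \phi)$; a further power $y'$ has prime order $p$, is semisimple, and satisfies $\im \phi \leqslant C_K(y') \subsetneq K$ (properness because $K$ is adjoint and $y' \neq 1$). Applying \cref{semisimple autos} and killing the abelian quotients $C_K(y')/C^0$ and $C^0/L$ via the perfectness of $\im \phi$, the whole image lies in $L = L_1 \cdots L_s$. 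Composing with $L \to \prod_i L_i^{\mathrm{ad}}$ (well-defined, with central kernel), each factor is adjoint Lie-type with $\vert L_i^{\mathrm{ad}} \vert < \vert K \vert$ and rank $< g$, so by induction the projection to each $L_i^{\mathrm{ad}}$ is trivial. Thus $\im \phi$ sits inside the abelian kernel, forcing $\phi = 1$.

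\textbf{Base case and main obstacle.} The base of the induction consists of the smallest-$\vert K \vert$ Lie-type groups, all of which admit a projective module of dimension below $2g$: for classical $K$ of rank $n < g$ this dimension is $\leqslant 2n + 1 \leqslant 2g - 1$ (see \cref{dynkinclassical}), while the low-rank exceptional cases in play (most notably $\Btwo_2$, of projective dimension $4$) are checked directly, and are therefore killed by \cref{main thm reps}. The principal subtlety is the purely semisimple case: \cref{semisimple autos} only bounds the sum of ranks of the Lie factors by $\mathrm{rank}(K)$, potentially with equality, blocking a rank-induction; the induction on $\vert K \vert$ circumvents this because each $L_i$ is genuinely a proper subgroup of the adjoint $K$ once $y' \neq 1$. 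Finally, the exclusion of $\G_2(q)$ and $\Gtwo_2(q)$ at $g = 3$ is exactly what is needed: by remark~(1) after \cref{semisimple autos} they never occur as Levi factors, and in the small-rank base cases that enter the purely semisimple step they do not occur as Lie factors either (by a root-system check).
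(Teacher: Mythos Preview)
Your argument is correct and follows essentially the same route as the paper's: Jordan decomposition of $\phi(T_1)$, then Borel--Tits plus Levi decomposition in the unipotent case (passing to a genus-$(g-1)$ subsurface inside $C_{\mcg{g}{b}}(T_1)$), and \cref{semisimple autos} together with \cref{p-rank,small p-rank} in the purely semisimple case; the paper runs a primary induction on $g$ with a secondary one on $\vert K\vert$, whereas you fold these into a single strong induction on $\vert K\vert$ uniform in $g$, which is an equally valid organisation. One cosmetic remark: your parenthetical handling of $g=3$ in the inductive step (the ``perfect derived subgroup'' device and the ``root-system check'') is unnecessary and a bit vague, since---as your own base-case paragraph already shows and as the paper makes explicit---every admissible $K$ of rank at most $2$ (that is, $\A_1,\A_2,\Atwo_2,\B_2,\Btwo_2$) has a faithful projective module of dimension at most $5<6$ and is therefore killed outright by \cref{main thm reps}, so the inductive step is never invoked at $g=3$.
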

\begin{proof}
Let $\phi$ denote such a homomorphism. The proof is an induction on $g$.
Since $\mcg{g}{b}$ is perfect we may assume that $K$ is the adjoint version.


For the base case we use \cref{main thm reps}: it immediately deals with the cases $\A_1(q), \A_2(q), \Atwo_2(q)$, and $\B_2(q)$. It actually also rules out the case $\Btwo_2(q)$, since the adjoint version of this type has a faithful projective representation in dimension $4$.

\smallskip
Suppose that $g \geqslant 4$.
Let $a = \phi(T_1)$. Let us look at the Jordan decomposition $a = a_u a_s$, where $a_u$ is the unipotent part and $a_s$ the semi-simple part. Let us first assume that $a_u \neq 1$. Since the Jordan decomposition is unique, $\phi\big(C_{\mcg{g}{b}}(T_1)\big)$ commutes with $a_u$, and therefore, by \cref{borel--tits}, there exists a proper parabolic subgroup $P$ containing $\phi(C_{\mcg{g}{b}}(T_1))$. Using \cref{levi factors} (and its notation) we see that the parabolic subgroup $P$ contains a nilpotent normal subgroup $U$ such that $P/U = L$.
Note that we have an epimorphism $\mcg{g-1}{b+2} \to C_{\mcg{g}{b}}(T_1)$ obtained by identifying two boundary components of $\Sigma_{g-1,b+2}$ and declaring the newly obtained simple closed curve to be the curve underlying $T_1$ (it is easy to see that this is an epimorphism; alternatively, see~\cite[Section 2.3]{AramayonaSouto2012}).
Let
\[
 \psi \colon \mcg{g-1}{b+2} \to P/U = L
\]
denote the homomorphism obtained by composing the above map with the restriction of $\phi$.
 The Levi factor $L$ contains a normal subgroup $M$ such that $L/M$ is abelian.
The group $\mcg{g-1}{b+2}$ is perfect, and therefore $\im \psi \leqslant M$. The group $M$ is a central product of finite groups of Lie type  whose rank is strictly smaller than the rank of $K$. The inductive hypothesis tells us that $\psi$ is trivial.
Therefore $\phi\big(C_{\mcg{g}{b}}(T_1)\big)$ is trivial as $U$ is nilpotent and $\mcg{g-1}{b+2}$ is perfect. Thus $\phi$ is the trivial homomorphism, as $C_{\mcg{g}{b}}(T_1)$ contains $T_2 \in \mathcal S$, and all generators in $\mathcal S$ are conjugate.

Note that the exclusion of $\G_2(q)$ and $\Gtwo_2(q)$ for $g=3$ is not a problem for the induction, since these groups do not appear as proper parabolic subgroups.


\smallskip
Now suppose that $a_u = 1$.  We may assume that $a_s \neq 1$, as otherwise we have $\phi(T_1)=1$ which trivialises $\phi$. Pick $n \in \Z$ so that $a_s^n$ is of prime order $p$ in $K$.
Note that $p$ does not divide the characteristic of the ground field of $K$, and so by \cref{p-rank,small p-rank} we see that $\im \phi$ centralises $a_s^n$.

Let $C$ denote the centraliser of $a_s^n$ in $K$. We apply \cref{semisimple autos}, and use the notation thereof. The centraliser $C$ contains a normal subgroup $C^0$ such $C/C^0$ is abelian. But $\mcg{g}{b}$ is perfect (as $g \geqslant 3$), and so $\im \phi \leqslant C^0$. Now $C^0/L$ is abelian, and so again we see that $\im \phi \leqslant L$. Now we need to consider two cases: either each of the Lie factors $L_1,\dots, L_s$ has rank strictly smaller than $K$, or there is only one such factor of rank equal to $K$. In the former case we apply the inductive hypothesis to $\phi$ followed by a projection $L \to L_i$ for each $i$, and conclude that $\phi$ is trivial. In the latter case we are satisfied with the conclusion that $\im \phi \leqslant L$, since $L \leqslant C$, and so is of smaller cardinality than $K$, as $K$ has no centre.
We now run a secondary induction on the order of $K$.
Again, the exclusion of $\G_2(q)$ and $\Gtwo_2(q)$ is not a problem, since it occurs only for $g=3$.
\end{proof}

\begin{rmk}
\label{Dthree}
In fact the above proof also shows that if $K$ is of type $\Dthree_4(q)$ or $\typeFtwo_4(q)$ then every homomorphism $\mcg{g}{b} \to K$ is trivial for every $g \geqslant 3$ and $b \geqslant 0$. The reason for this is that the simple factors of the Levi factor of any proper parabolic subgroup of $K$ or the Lie factors of centralisers of semi-simple elements are of type $\A_1$, $\A_2$, $\Atwo_2$ or  $\Btwo_2$, and so all of rank at most $2$.
\end{rmk}

\begin{cor}
\label{exceptionals}
None of the groups of exceptional type is the smallest quotient of some $\mcg{g}{b}$ with $g \geqslant 3$, $b\geqslant 0$.
\end{cor}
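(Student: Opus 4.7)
My plan is to combine \cref{lie type}, \cref{Dthree}, and \cref{main thm reps} with elementary order comparisons against $|\sp_{2g}(2)|$. Because $\sp_{2g}(2)$ is itself a quotient of $\mcg{g}{b}$, any smallest quotient $K$ satisfies $|K|\leqslant|\sp_{2g}(2)|$, and since $\mcg{g}{b}$ is perfect by \cref{perfect}, $K$ must be non-abelian simple, so only the adjoint versions of exceptional groups need be considered.

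\cref{Dthree} immediately rules out $\Dthree_4(q)$ and $\typeFtwo_4(q)$ for every $g\geqslant 3$, and \cref{lie type} eliminates every exceptional $K$ of rank strictly less than $g$, except for $\G_2(q)$ and $\Gtwo_2(q)$ when $g=3$. Since the largest exceptional rank is $8$, only finitely many pairs $(g,K)$ remain. For the two $g=3$ exceptions, the small non-simple instances $\G_2(2)\cong\Atwo_2(3)\rtimes 2$ and $\Gtwo_2(3)\cong\A_1(8)\rtimes 3$ fail to be perfect and hence are never quotients of the perfect group $\mcg{3}{b}$; every other simple $\G_2(q)$ or $\Gtwo_2(q)$ has order at least $|\G_2(3)|=4245696$ or $|\Gtwo_2(27)|$ respectively, both greatly exceeding $|\sp_6(2)|=1451520$.

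For the remaining pairs, with rank of $K$ at least $g$, we compare orders. Noting that the smallest instance of each type occurs at $q=2$ and that orders grow monotonically in $q$, elementary computation yields $|\typeF_4(2)|>|\sp_8(2)|$, $|\E_6(2)|>|\sp_{12}(2)|$ (by a slim margin), $|\E_7(2)|>|\sp_{14}(2)|$, and $|\E_8(2)|>|\sp_{16}(2)|$. The single borderline case is $K=\Etwo_6(q)$ at $g=6$, where $|\Etwo_6(2)|<|\sp_{12}(2)|$ and the order argument breaks down. For this pair we revisit the proof of \cref{lie type}: a Dynkin-level analysis (exploiting that the twisted Dynkin of $\Etwo_6$ is $\typeF_4$) shows that the only simple factor of rank $g-1=5$ that can appear in the Levi quotient of any proper parabolic of $\Etwo_6(q)$ is of type $\Atwo_5(q)$, which by \cref{dynkinclassical} admits a faithful $6$-dimensional projective representation; since $6<2(g-1)=10$, \cref{main thm reps} forces the induced restriction $\mcg{5}{b+2}\to\Atwo_5(q)$ to be trivial, while all other Levi factors have rank at most $4$ and are killed by the inductive statement of \cref{lie type}. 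The semisimple case runs through via the same secondary induction on $|K|$ as in the proof of \cref{lie type}, after observing that no centraliser of a nontrivial semisimple element in the centreless group $\Etwo_6(q)$ can contain a single Lie factor of full rank $6$. The principal obstacle is this delicate $\Etwo_6(q)$ case at $g=6$, where a refined parabolic-level analysis replaces a straightforward order bound.
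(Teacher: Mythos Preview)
Your argument parallels the paper's proof up through the order comparisons. At that point the paper simply asserts the chain $|\E_6(2)|>|\Etwo_6(2)|>|\sp_{12}(2)|$ and is done; you instead compute $|\Etwo_6(2)|<|\sp_{12}(2)|$ and propose a structural workaround for the pair $(g,K)=(6,\Etwo_6(q))$. Your arithmetic is in fact correct --- one has $|\Etwo_6(2)|=2^{36}\cdot 3^{9}\cdot 5^{2}\cdot 7^{2}\cdot 11\cdot 13\cdot 17\cdot 19$ versus $|\sp_{12}(2)|=2^{36}\cdot 3^{8}\cdot 5^{3}\cdot 7^{2}\cdot 11\cdot 13\cdot 17\cdot 31$, with ratio $57/155<1$ --- so the paper's order bound does not dispose of $\Etwo_6(2)$ at $g=6$, and some additional argument is genuinely needed there.

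Your unipotent case for $\Etwo_6(q)$ is sound: the only rank-$5$ simple Levi component arising in a proper parabolic of $\Etwo_6(q)$ is indeed of type $\Atwo_5$, and this is killed by \cref{main thm reps} applied to $\mcg{5}{b+2}$, while the remaining factors have rank at most $4$ and fall to \cref{lie type}. The gap is in your semisimple case. The ``same secondary induction on $|K|$'' from the proof of \cref{lie type} only becomes available \emph{after} one has placed $\im\phi$ inside the centraliser $C$ of some power $a_s^{n}$ of prime order $p$; in that proof this step is achieved via \cref{p-rank} and \cref{small p-rank}(1), which require the $p$-rank of $K$ to be strictly less than $g$. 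Here the rank of $K$ equals $g=6$, so \cref{p-rank} yields only $p$-rank $\leqslant 6$, and for primes $p$ dividing $q+1$ (in particular $p=3$ when $q=2$) the multiplicity of $\Phi_{2}$ in the order polynomial of $\Etwo_6(q)$ is exactly $6$. You therefore cannot conclude that $\im\phi$ centralises any nontrivial power of $\phi(T_1)$, and your downstream observation about rank-$6$ Lie factors in centralisers never comes into play. Closing this gap requires either a sharper bound on the $3$-rank of $\Etwo_6(2)$ or an entirely different mechanism for trapping $\im\phi$ in a proper subgroup.
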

\begin{proof}
Let $K$ be a finite exceptional group of Lie type. Since $\mcg{g}{b}$ is perfect, we need only consider the adjoint versions. \cref{lie type} tells us that $K$ cannot be a quotient of $\mcg{g}{b}$ unless $g$ is bounded above by the rank of $K$, or unless $g=3$ and $K$ is the adjoint version of $\G_2(q)$ or $\Gtwo_2(q)$.
The types $\Dthree_4$  and $\typeFtwo_4(q)$ are excluded by \cref{Dthree}.

A direct computation of orders (see \cite[Table 2.2]{Gorensteinetal1998}) tells us that
\begin{enumerate}
\item $\vert \Gtwo_2(27) \vert > \vert \G_2(3) \vert > \vert \sp_6(2) \vert$
\item $\vert \typeF_4(2) \vert  > \vert \sp_8(2) \vert$
\item $\vert \E_6(2) \vert > \vert \Etwo_6(2) \vert > \vert \sp_{12}(2) \vert$
\item $\vert \E_7(2) \vert > \vert \sp_{14}(2) \vert$
\item $\vert \E_8(2) \vert > \vert \sp_{16}(2) \vert$
\end{enumerate}
and this concludes the proof, since the smallest member of each of the families except $\G_2$ and $\Gtwo_2$  is the group defined over the field of $2$ elements. For $\G_2$, the smallest simple member of the family is $\G_2(3)$; for $\Gtwo_2$, the smallest simple member of the family is $\Gtwo_2(27)$.
\end{proof}

\subsection{Sporadic groups}

In this section we prove the main result.

\begin{thm}
 \label{main thm}
 Let $K$ be a non-trivial finite quotient of $\mcg{g}{b}$, the mapping class group of a connected orientable surface of genus $g\geqslant 3$ with $b$ boundary components. Then either $\vert K \vert > \vert \sp_{2g}(2) \vert$, or $K \cong \sp_{2g}(2)$ and the quotient map is obtained by postcomposing the natural map $\mcg{g}{b} \to \sp_{2g}(2)$ with an automorphism of $\sp_{2g}(2)$.
\end{thm}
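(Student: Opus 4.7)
The plan is to assemble the results of the preceding subsections and handle the remaining sporadic simple groups. First, since $\mcg{g}{b}$ is perfect by \cref{perfect}, every finite quotient $K$ is perfect; so for any maximal normal subgroup $M$ of $K$ the quotient $K/M$ is a non-abelian finite simple quotient of $\mcg{g}{b}$ with $|K/M| \leqslant |K|$. It therefore suffices to show that every non-abelian finite simple quotient of $\mcg{g}{b}$ has order at least $|\sp_{2g}(2)|$, with equality forcing isomorphism with $\sp_{2g}(2)$. The uniqueness clause of the theorem (that the map is the natural one postcomposed with an automorphism of $\sp_{2g}(2)$) then follows immediately from \cref{uniqueness}.

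Applying the CFSG, cyclic groups of prime order are abelian and thus excluded. The alternating groups and the classical groups of Lie type are dealt with by \cref{large g}, which identifies $\sp_{2g}(2)$ as the smallest quotient in those families. The exceptional groups of Lie type are excluded by \cref{exceptionals}. What remains is the family of the $26$ sporadic simple groups.

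For each sporadic $S$ the goal is to show that $S$ is not a quotient of $\mcg{g}{b}$ whenever $|S| \leqslant |\sp_{2g}(2)|$. Since $|\sp_{2g}(2)| = 2^{g^2} \prod_{i=1}^{g}(2^{2i}-1)$ grows rapidly while the sporadic orders are bounded, only finitely many pairs $(g, S)$ with $g \geqslant 3$ can contribute, and in practice only the values $g \in \{3,4,5\}$. The principal tool is \cref{ranks cor}: as each sporadic is centreless, if its $m$-rank is strictly less than $g$ for every $m \geqslant 3$, then it cannot be a quotient. Since the $m$-rank for composite $m$ is bounded by the $p$-ranks for odd primes $p$ dividing $m$, this reduces to reading off the odd $p$-ranks of each candidate from the ATLAS. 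For the handful of residual pairs where \cref{ranks cor} narrowly fails (a sporadic with some odd $p$-rank equal to $g$), I would instead invoke \cref{main thm reps} with the minimal faithful projective representation dimension of $S$, or \cref{bgparis} with the minimal faithful permutation degree of $S$; one of these auxiliary bounds always suffices once the order of $S$ has been constrained.

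I expect the main obstacle to be the case-by-case check at $g = 3$, where $|\sp_6(2)| = 1{,}451{,}520$ and the sporadic candidates are $M_{11}, M_{12}, M_{22}, J_1, J_2$. Each of these has all odd $p$-ranks at most $2$, so \cref{ranks cor} dispatches them uniformly, but the verification depends on ATLAS lookups. At $g = 4, 5$ the candidate list grows and a handful of sporadics of moderate $p$-rank (such as $\mathrm{McL}$ or $\mathrm{HS}$) require the auxiliary appeal to \cref{main thm reps} or \cref{bgparis}. No individual sporadic is hard to rule out, but the bookkeeping is unavoidable, and finalising the proof reduces to confirming that across the three relevant values of $g$ each candidate sporadic falls under at least one of the three exclusion criteria.
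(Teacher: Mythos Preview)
Your overall strategy (reduce to simple quotients, apply CFSG, cite \cref{large g}, \cref{exceptionals}, \cref{uniqueness}) matches the paper. The sporadic case, however, contains real gaps.

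First, two small errors. The claim that only $g\in\{3,4,5\}$ arise is false: the sporadic orders run well past $|\sp_{10}(2)|$, and in fact $g(K)$ (the least $g$ with $|K|<|\sp_{2g}(2)|$) equals $6$ for $\Fi_{23},\mathrm{Co}_1,\mathrm{J}_4$, $7$ for $\Fi_{24}'$, $8$ for $\mathrm{B}$, and $10$ for $\mathrm{M}$. Second, your reduction of the $m$-rank hypothesis in \cref{ranks cor} to odd $p$-ranks overlooks $m$ a power of $2$: for $m=4$ there is no odd prime factor, so the $4$-rank must be controlled separately (the paper does exactly this).

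The decisive gap is your fallback plan for the sporadics not eliminated by \cref{ranks cor}. You propose to use either \cref{main thm reps} (minimal projective dimension $<2g$) or \cref{bgparis} (minimal permutation degree $<2^{g-1}(2^g-1)$). In practice neither bound applies to most of the eleven residual cases. For example, $\Fi_{22}$ at $g=5$ has minimal permutation degree $3510>496$ and minimal faithful projective dimension far above $10$; similarly $\mathrm{Co}_2$, $\mathrm{Suz}$, $\Fi_{23}$, $\mathrm{Co}_1$, $\mathrm{J}_4$, $\Fi_{24}'$, $\mathrm{B}$, $\mathrm{M}$ all escape both tests. The paper's argument is genuinely different here: it observes that $C_{\mcg{g}{b}}(T_1)$ receives an epimorphism from $\mcg{g-1}{b+2}$, so $\phi\big(C_{\mcg{g}{b}}(T_1)\big)$ lies in the centraliser of an element of order $2$ or $3$ in $K$ (using \cref{small p-rank} to restrict the order of $\phi(T_1)$). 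One then checks, for each of the eleven sporadics, that the non-abelian simple factors of such centralisers are alternating, Lie-type of small rank, or smaller sporadics already handled, so that $\mcg{g(K)-1}{b+2}$ maps trivially to each of them. This inductive centraliser argument is the missing idea; without it the sporadic case does not close.
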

\begin{proof}
Without loss of generality, we may take $K$ to be a smallest quotient of $\mcg{g}{b}$.
\cref{exceptionals} tells us that $K$ is not an exceptional group of Lie type, and by \cref{large g} we see that we need only rule out the sporadic groups.

We will go through the list of sporadic groups in order of increasing cardinality (see \cref{spo}). Considering a group $K$, we define $g(K)$ to be the smallest $g$ such that $\vert K \vert < \sp_{2g}(2)$.
Our aim is to show that $K$ is not the quotient of any $\mcg{g}{b}$ with $g \geqslant g(K)$.

Suppose that we have achieved our goal for all groups smaller than $K$. If we show that $K$ is not a quotient of $\mcg {g(K)} b$ for any $b$, then we can also conclude that it is not a quotient of any $\mcg{g}{b}$ with $g > g(K)$: suppose that $K$ is a quotient of such a $\mcg{g}{b}$. We have a homomorphism $\mcg{g(K)}1 \to \mcg{g}{b}$, and its image in $K$ is a subgroup of $K$, and hence either $K$ itself, which is impossible, or a proper subgroup of $K$. But this is also impossible, since we have shown that groups smaller than $K$ cannot be quotients of $\mcg{g(K)}1$.

Let us now start going through the list, and suppose that we consider a group $K$;
at this point we already know that all non-trivial groups smaller that $K$ are never quotients of $\mcg {g(K)} b$. Thus, when looking at $K$, we may use the assumption that all homomorphisms from $\mcg {g(K)} b$ to groups smaller than $K$ are trivial.

\begin{table}[h]\centering\begin{tabular}{l || r }
$K$                                                     &Order of $K$\\
\hline
$\sp_{4}(2)$                                     &720\\
\hline
$\mathrm{M}_{11}$                               &7920\\
$\mathrm{M}_{12}$                                       &95040\\
$\mathrm{J}_1$                                  &175560\\
$\mathrm{M}_{22}$                                       &443520\\
$\mathrm{J}_2$                                  &604800\\
$\sp_{6}(2)$                                     &1451520\\
\hline
$\mathrm{M}_{23}$                                       &10200960\\
$\mathrm{HS}$                                   &44352000\\
$\mathrm{J}_3$                                  &50232960\\
$\mathrm{M}_{24}$                                       &244823040\\
$\mathrm{M^cL}$                                 &898128000\\
$\mathrm{He}$                                   &4030387200\\
$\sp_{8}(2)$                                     &47377612800\\
\hline
$\mathrm{Ru}$                                   &145926144000\\
$\mathrm{Suz}$                                  &448345497600\\
$\mathrm{O'N}$                                  &460815505920\\
$\mathrm{Co}_3$                                 &495766656000\\
$\mathrm{Co}_2$                                 &42305421312000\\
$\Fi_{22}$                                      &64561751654400\\
$\mathrm{HN}$                                   &273030912000000\\
$\sp_{10}(2)$                                    &24815256521932800\\
\hline
$\mathrm{Ly}$                                           &51765179004000000\\
$\mathrm{Th}$                                           &90745943887872000\\
$\Fi_{23}$                                      &4089470473293004800\\
$\mathrm{Co}_1$                         &4157776806543360000\\
$\mathrm{J}_4$                          &86775571046077562880\\
$\sp_{12}(2)$                                    &208114637736580743168000\\
\hline
$\Fi_{24}'$                                     &1255205709190661721292800\\
$\sp_{14}(2)$                                    &27930968965434591767112450048000\\
\hline
$\mathrm{B}$                            &4154781481226426191177580544000000\\
$\sp_{16}(2)$                                    &59980383884075203672726385914533642240000\\
\hline
$\sp_{18}(2)$                                    &2060902435720151186326095525680721766346957783040000\\
\hline
$\mathrm{M}$            &808017424794512875886459904961710757005754368000000000\\
$\sp_{20}(2)$                                    &1132992015386677099994486205757869431795095310094129168384000000\\
\hline
\end{tabular} \caption{}
\label{spo}
\end{table}

We start by invoking \cref{ranks cor}, and identify those groups $K$ whose $m$-rank, where $m$ is equal to $4$ or an odd prime, is less than $g(K)$ (this is equivalent to having all $m$-ranks less than $g(K)$ for all $m \geqslant 3$). The prime ranks of sporadic groups are known and listed in \cite[Table 5.6.1]{Gorensteinetal1998}; it turns out that for $m\geqslant 5$ the ranks are always bounded above by $g(K)-1$.
If the $3$-rank or the $4$-rank of $K$ is at least $g(K)$, then
$K$ is
listed in \cref{spo2}.
(The $4$-rank was computed using GAP~\cite{gap} -- because of the 
complexity of the problem of determining the $4$-rank, we have only computed it for some sporadic groups. Thus the list may contain groups whose $3$- and $4$-ranks are in fact smaller than $g(K)$. It may also contain traces of nuts.) At this point we already know that the remaining $15$ sporadic groups are not the smallest quotients of $\mcg{g}{b}$.

Suppose that $K$ is one of the $11$ groups in \cref{spo2}, and let
 \[\phi \colon \mcg{g(K)}{b} \to K\]
 denote the putative quotient map. We may assume that $\phi(T_1)$ is not trivial and of order divisible only by $2$ and $3$ (the latter assumption is justified by \cref{small p-rank}). Thus some power of $\phi(T_1)$ will have order exactly $2$ or $3$, and the centraliser of $T_1$ in $\mcg{g(K)}{b}$ is an epimorphic image of $\mcg{g(K)-1}{b+2}$. Thus, if we know that every homomorphism from $\mcg{g(K)-1} b$ to any simple non-abelian factor of a centraliser of an element of order $2$ or $3$ in $K$ is trivial, then we may conclude that $\phi$ is trivial.

For the remaining $11$ sporadic groups, \cref{spo2} lists the non-abelian simple factors of centralisers of elements of order $2$ or $3$ (the information is taken from \cite[Table 5.3]{Gorensteinetal1998}).
(To simplify the notation, in the sequel we use type to denote the corresponding adjoint version.)

\begin{table}[h]
	\centering
		\begin{tabular}{c c c c c}\hline
   				& 		& \multicolumn{3}{c}{Simple factors of centralisers of elements of order $2$ or $3$}\\
$K$				&$g(K)$	&\!\!\!\!\!Alternating				& \!\!\!\!\!\!\!\!\!\! Lie type				&\!\!\!\!\!Sporadic\\
\hline
$\mathrm{M^cL}$	&4		&\!\!\!\!\!$\Alt_5$, $\Alt_8$ 			&\!\!\!\!\!\!\!\!\!\!--									&\!\!\!\!\!--\\
\hline
$\mathrm{Suz}$	&5		&\!\!\!\!\!$\Alt_6$					&\!\!\!\!\!\!\!\!\!\!$\A_2(4)$, $\Atwo_3(3)$, $\Dtwo_3(2)$		&\!\!\!\!\!--\\
$\mathrm{Co}_3$	&		&\!\!\!\!\!$\Alt_5$, $\Alt_6$			&\!\!\!\!\!\!\!\!\!\!$\A_1(8)$, $\typeC_3(2)$				&\!\!\!\!\!$\mathrm{M}_{12}$\\
$\mathrm{Co}_2$	&		&\!\!\!\!\!$\Alt_5$, $\Alt_6$, $\Alt_8$	&\!\!\!\!\!\!\!\!\!\!$\typeC_2(3)$, $\typeC_3(2)$				&\!\!\!\!\!-- \\
$\Fi_{22}$			&		&\!\!\!\!\!--						&\!\!\!\!\!\!\!\!\!\!$\Atwo_3(3)$, $\Atwo_5(2)$, $\Dtwo_3(2)$	&\!\!\!\!\!--\\
\hline
$\Fi_{23}$			&6		&\!\!\!\!\!--						&\!\!\!\!\!\!\!\!\!\!$\Atwo_5(2)$, $\B_3(3)$, $\typeC_2(3)$, $\Dtwo_3(2)$   &\!\!\!\!\!$\Fi_{22}$ \\
$\mathrm{Co}_1$	&		&\!\!\!\!\!$\Alt_9$					&\!\!\!\!\!\!\!\!\!\!$\Atwo_3(3)$, $\typeC_2(3)$, $\D_4(2)$, $\G_2(4)$  & \!\!\!\!\!$\mathrm{M}_{12}$, $\mathrm{Suz}$ \\
$\mathrm{J}_4$	&		&\!\!\!\!\!--						&\!\!\!\!\!\!\!\!\!\!--											& \!\!\!\!\!$\mathrm{M}_{22}$\\
\hline
$\Fi_{24}'$		&7		&\!\!\!\!\!--						&\!\!\!\!\!\!\!\!\!\!$\Atwo_3(3)$, $\Atwo_4(2)$, $\Dtwo_3(2)$, $\D_4(3)$, $\G_2(3)$ 		& \!\!\!\!\!$\Fi_{22}$ \\
$\mathrm{B}$		&8		&\!\!\!\!\!--						&\!\!\!\!\!\!\!\!\!\!$\Dtwo_3(2)$, $\D_4(2)$, $\typeF_4(2)$, $\Etwo_6(2)$  		&\!\!\!\!\! $\mathrm{Co}_2$, $\Fi_{22}$ \\
$\mathrm{M}$		&10		&\!\!\!\!\!--						&\!\!\!\!\!\!\!\!\!\!--						& \!\!\!\!\!$\mathrm{B}$, $\mathrm{Co}_1$, $\Fi_{24}'$, $\mathrm{Suz}$, $\mathrm{Th}$   \\
\hline
\end{tabular} \caption{}
\label{spo2}
\end{table}

Every homomorphism from $\mcg{g}{b}$ with $g\geqslant 4$ to any of the alternating groups visible in the table is trivial, since by \cref{bgparis} the smallest alternating group to which $\mcg 4 b$ can map non-trivially has rank $2^3 \cdot (2^4-1) = 120$.

All the groups of Lie type occurring as simple factors are eliminated by \cref{lie type}, with the the exception of $\Atwo_5(2)$ inside $\Fi_{22}$; this group is eliminated by \cref{main thm reps}.

The last column of \cref{spo2} lists the remaining sporadic simple factors. Each of these is too small to allow for a non-trivial homomorphism from the relevant $\mcg{g(K)-1} {b+1}$.
\end{proof}

\newpage

\bibliographystyle{math}
\bibliography{bibliographymcg}

\newcommand{\etalchar}[1]{$^{#1}$}
\begin{thebibliography}{CCN{\etalchar{+}}}

\bibitem[AS]{AramayonaSouto2012}
Javier Aramayona and Juan Souto.
\newblock {Homomorphisms between mapping class groups}.
\newblock {\em Geom. Topol.} {\bf 16}(2012), 2285--2341.

\bibitem[BKP]{Baumeisteretal2017}
Barbara Baumeister, Dawid Kielak, and Emilio Pierro.
\newblock {On the smallest non-abelian quotient of $\mathrm{Aut}(F_n)$}.
\newblock {\em {arXiv}:1705.02885}.

\bibitem[BGP]{Berricketal2014}
A.~J. Berrick, V.~Gebhardt, and L.~Paris.
\newblock {Finite index subgroups of mapping class groups}.
\newblock {\em Proc. Lond. Math. Soc. (3)} {\bf 108}(2014), 575--599.

\bibitem[But]{Button2016}
J.O. Button.
\newblock {Mapping class groups are not linear in positive characteristic}.
\newblock {\em {arXiv}:1610.08464}.

\bibitem[CCN{\etalchar{+}}]{atlas}
J.~H. Conway, R.~T. Curtis, S.~P. Norton, R.~A. Parker, and R.~A. Wilson.
\newblock {\em An {A}tlas of finite groups}.
\newblock Oxford University Press, Eynsham, 1985.

\bibitem[DM]{DeriziotisMichler1987}
D.~I. Deriziotis and G.~O. Michler.
\newblock {Character table and blocks of finite simple triality groups
  {$^3D_4(q)$}}.
\newblock {\em Trans. Amer. Math. Soc.} {\bf 303}(1987), 39--70.

\bibitem[FM]{FarbMargalit2012}
Benson Farb and Dan Margalit.
\newblock {\em A primer on mapping class groups}, volume~49 of {\em Princeton
  Mathematical Series}.
\newblock Princeton University Press, Princeton, NJ, 2012.

\bibitem[FP]{FormanekProcesi1992}
Edward Formanek and Claudio Procesi.
\newblock {The automorphism group of a free group is not linear}.
\newblock {\em J. Algebra} {\bf 149}(1992), 494--499.

\bibitem[FH]{FranksHandel2013}
John Franks and Michael Handel.
\newblock {Triviality of some representations of {${\rm MCG}(S_g)$} in
  {$GL(n,\Bbb C)$}, {${\rm Diff}(S^2)$} and {${\rm Homeo}(\Bbb T^2)$}}.
\newblock {\em Proc. Amer. Math. Soc.} {\bf 141}(2013), 2951--2962.

\bibitem[GLS]{Gorensteinetal1998}
Daniel Gorenstein, Richard Lyons, and Ronald Solomon.
\newblock {\em The classification of the finite simple groups. {N}umber 3.
  {P}art {I}. {C}hapter {A}}, volume~40 of {\em Mathematical Surveys and
  Monographs}.
\newblock American Mathematical Society, Providence, RI, 1998.
\newblock Almost simple $K$-groups.

\bibitem[Gro]{Grossman1974}
Edna~K. Grossman.
\newblock {On the residual finiteness of certain mapping class groups}.
\newblock {\em J. London Math. Soc. (2)} {\bf 9}(1974/75), 160--164.

\bibitem[Kie]{Kielak2013}
Dawid Kielak.
\newblock {Outer automorphism groups of free groups: linear and free
  representations}.
\newblock {\em J. London Math. Soc.} {\bf 87}(2013), 917--942.

\bibitem[KL]{KleidmanLiebeck1990}
Peter Kleidman and Martin Liebeck.
\newblock {\em The subgroup structure of the finite classical groups}, volume
  129 of {\em London Mathematical Society Lecture Note Series}.
\newblock Cambridge University Press, Cambridge, 1990.

\bibitem[Kor1]{Korkmaz2002}
Mustafa Korkmaz.
\newblock {Low-dimensional homology groups of mapping class groups: a survey}.
\newblock {\em Turkish J. Math.} {\bf 26}(2002), 101--114.

\bibitem[Kor2]{Korkmaz2011}
Mustafa Korkmaz.
\newblock {Low-dimensional linear representations of mapping class groups}.
\newblock {\em {arXiv}:1104.4816}.

\bibitem[Kor3]{Korkmaz2011a}
Mustafa Korkmaz.
\newblock {The symplectic representation of the mapping class group is unique}.
\newblock {\em {arXiv}:1108.3241}.

\bibitem[Lic]{Lickorish1965}
W.~B.~R. Lickorish.
\newblock {On the homeomorphisms of a non-orientable surface}.
\newblock {\em Proc. Cambridge Philos. Soc.} {\bf 61}(1965), 61--64.

\bibitem[Shi]{Shinoda75}
K.~Shinoda.
\newblock {The conjugacy classes of the finite {R}ee groups of type
  {$(F_{4})$}}.
\newblock {\em J. Fac. Sci. Univ. Tokyo Sect. IA Math.} {\bf 22}(1975), 1--15.

\bibitem[GAP]{gap}
{The GAP Group}.
\newblock {\em {GAP -- Groups, Algorithms, and Programming, Version 4.7.8}}.
\newblock The GAP~Group, 2015.
\newblock \texttt{http://www.gap-system.org}.

\bibitem[Wil]{raw}
R.~A. Wilson.
\newblock {\em The finite simple groups}, volume 251 of {\em Graduate Texts in
  Mathematics}.
\newblock Springer-Verlag London, Ltd., London, 2009.

\bibitem[Zim]{Zimmermann2012}
Bruno~P. Zimmermann.
\newblock {On minimal finite quotients of mapping class groups}.
\newblock {\em Rocky Mountain J. Math.} {\bf 42}(2012), 1411--1420.

\end{thebibliography}

\bigskip
\noindent Dawid Kielak \hfill \href{mailto:dkielak@math.uni-bielefeld.de?subject=Smallest quotients of MCGs&body=Dear Dawid,}{\texttt{dkielak@math.uni-bielefeld.de}} \newline
\noindent Emilio Pierro \hfill  \href{mailto:e.pierro@mail.bbk.ac.uk}{\texttt{e.pierro@mail.bbk.ac.uk}} \newline
Fakult\"at f\"ur Mathematik  \newline
Universit\"at Bielefeld \newline
Postfach 100131  \newline
D-33501 Bielefeld \newline
Germany \newline

\end{document}